\documentclass[11pt]{article}
\usepackage{amsmath}
\usepackage{amsthm}
\usepackage{amsfonts}
\usepackage{amssymb}
\usepackage{bm}
\usepackage{microtype}
\usepackage[colorlinks=true]{hyperref}
\theoremstyle{plain}
\newtheorem{thm}{Theorem}[section]
\newtheorem{lem}[thm]{Lemma}

\newtheorem{assumption}[thm]{Assumption}

\newtheorem{definition}[thm]{Definition}
\theoremstyle{remark}
\newtheorem{rmk}[thm]{Remark}
\usepackage{pifont}

\usepackage{xcolor}
\usepackage{graphicx}
\usepackage{caption}
\usepackage{subcaption} 
\usepackage{float} 
\usepackage{booktabs}
\usepackage{tabularx}
\usepackage{array}
\title{Mean field and N-agent games for optimal relative consumption-investment with jump risk and common noise}
\author{Yiming Jiang, Fuxing Li, Yawei Wei, Zimeng Zheng}
\begin{document}
	\maketitle
	{
		
	}
	\begin{abstract}
		This paper studies an optimal consumption--investment problem for competitive agents in an \(N\)-player game and its associated mean field game. Each agent invests in an individual risky asset subject to idiosyncratic noise, common noise and downward jump risk, and the interaction among agents is induced by relative performance concerns in both consumption and terminal wealth. In the mean field limit, we characterize a deterministic mean field equilibrium in analytical form by using the stochastic maximum principle. Numerical experiments are presented to illustrate the resulting equilibrium and its financial implications. Finally, based on the obtained mean field equilibrium, we construct an approximate Nash equilibrium for the \(N\)-player game. This model is motivated by \cite{Merton1971} and \cite{Lacker2020}.
	\end{abstract}
	\section{Introduction and main results}
	We consider a large financial market populated by $N$ agents sharing a common finite time horizon $[0,T]$, where $T>0$. Each agent $i$ may invest in a common riskless bond with zero interest rate or in an individual stock $i$. For $i=1,\ldots,N$, the price process of stock $i$ is given by the following stochastic differential equation (SDE):
	\begin{equation}\label{Ssde}
		\frac{dS_t^i}{S_{t-}^i}=b_i dt+\sigma_idW_t^i+\sigma_i^0dW_t^0-dM_t^i,\qquad t\in[0,T]
	\end{equation}
	with constant parameters $b_i>0$ and $\sigma_i,\sigma_i^0>0$. Here, $W^0=(W_t^0)_{t\in[0,T]}$ appears in the price dynamics of all agents and represents the common noise in the financial market, while $W^i=(W_t^i)_{t\in[0,T]}$ denotes the idiosyncratic noise associated with agent $i$. The Brownian motions $W^0,W^1,\ldots,W^N$ are independent on a filtered probability space $(\Omega,\mathcal F,\mathbb F,\mathbb P)$, where
	$\mathbb F=(\mathcal F_t)_{t\in[0,T]}$ satisfies the usual conditions. Moreover, we set the filtration $\mathbb{F}^0=\left( \mathcal{F}^0_t \right)_{t\in [0,T]}=\left( \sigma\left( W_s^0;s\le t \right) \right)_{t\in [0,T]}$. In the above equation, $M^i=(M_t^i)_{t\in[0,T]}$ denotes the compensated process of $N^i=(N_t^i)_{t\in[0,T]}$, where $N^i$ is a Poisson process with intensity $\lambda_i$.
	The presence of $M^i$ allows the stock price to be subject to downward jump risk, a classical modeling feature already considered by Merton \cite{Merton1976}. In particular, the Poisson processes $N^1,\ldots,N^N$ are mutually independent and independent of the Brownian motions $W^0,W^1,\ldots,W^N$. We define the global market filtration
	$\mathbb G=(\mathcal G_t)_{t\in[0,T]}=(\mathcal F_t\vee \sigma((N^1_s,\ldots,N^N_s)^\top;\,0\le s\le t))_{t\in[0,T]}$
	as the right-continuous augmentation by $\mathbb P$-null sets. Then, for each
	$i=1,\ldots,N$, the compensated process $M_t^i=N_t^i-\lambda_i t$ is a
	$(\mathbb P,\mathbb G)$-martingale on $[0,T]$. In addition, the Brownian motions
	$W^0,W^1,\ldots,W^N$ remain Brownian motions with respect to $\mathbb G$.
	A similar filtration setting can be found in Bo et al. \cite{Bo2024a}.
	
	For $i=1,\ldots,N$, let $\pi^i=(\pi_t^i)_{t\in[0,T]}$ denote the proportion of wealth that agent $i$ invests in stock $i$, and let $c^i=(c_t^i)_{t\in[0,T]}$ denote the instantaneous consumption rate per unit wealth of agent $i$. Then the corresponding self-financing wealth processes $X^i=(X_t^i)_{t\in[0,T]}$ satisfy the following SDE:
	\begin{equation}\label{SDEi}
		\left\{
		\begin{aligned}
			&dX_t^i=\left(\pi_t^ib_i-c_t^i\right)X_t^i dt+\pi_t^i X_t^i\sigma_idW_t^i+\pi_t^i X_t^i\sigma_i^0dW_t^0-\pi_t^i X_{t-}^idM_t^i, \\
			&X_0^i=x_0^i,
		\end{aligned}
		\right.
	\end{equation}
	where $x_0^i>0$ denotes the initial wealth of agent $i$. We now introduce the admissible control set for each agent.
	\begin{definition}[Admissible control set for agent $i$]\label{mathcalAi}
		For each agent $i=1,\ldots,N$, the admissible control set $\mathcal A_i$ consists of all pairs of processes
		$(\pi^i,c^i)=(\pi_t^i,c_t^i)_{t\in[0,T]}$ such that $\pi^i$ is $\mathbb G$-predictable, $c^i$ is $\mathbb G$-progressively measurable,
		\[
		(\pi_t^i,c_t^i)\in [D_0,1-\epsilon_0]\times (0,\infty)
		\quad \text{for all } t\in[0,T],\quad \mathbb P\text{-a.s.},
		\]
		and
		\[
		\int_0^T c_t^i\,dt<\infty,\qquad \mathbb P\text{-a.s.}
		\]
		Here $D_0\in\mathbb R$ is a constant and $\epsilon_0\in(0,1)$ is a sufficiently small constant.
	\end{definition}
	\begin{rmk}\label{rmk:admissible}
		The admissibility conditions are imposed to ensure that the wealth equation
		\eqref{SDEi} is well-posed and that the corresponding wealth process remains strictly positive. In particular, the lower bound $D_0$ rules out unbounded short positions, while the constraint $\pi_t^i\le 1-\epsilon_0$ prevents bankruptcy after a downward jump, since $X_t^i=X_{t-}^i(1-\pi_t^i)>0$.
	\end{rmk}
	
	The aim of agent $i$ is to maximize the following objective functional over all admissible strategies $(\pi^i,c^i)\in\mathcal A_i$:
	\begin{equation}\label{object_i}
		J_i((\pi^i,c^i)_{i=1}^N)=\mathbb{E}\left[\int_0^T U(c_t^iX_t^i(\overline{c_tX_t})^{-\theta_i};\gamma_i)dt+\varepsilon_iU(X_T^i\overline{X_T}^{-\theta_i};\gamma_i)\right],
	\end{equation}
	where $U(x;\gamma)=\frac{1}{\gamma}x^\gamma$ is a constant relative risk aversion (CRRA) utility function. Moreover, $\overline{c_tX_t}:=\left(c_t^1X_t^1\cdots c_t^N X_t^N\right)^{1/N}$ and $\overline{X_T}:=\left(X_T^1\cdots X_T^N\right)^{1/N}$ denote the population geometric averages of consumption and terminal wealth, respectively. The parameters $\gamma_i\in(0,1)$ and $\theta_i\in[0,1]$ characterize the risk preference and the competition weight of agent $i$, while $\varepsilon_i>0$ measures the relative importance assigned to terminal wealth.
	
	In the presence of jump risk in the wealth dynamics \eqref{SDEi}, the explicit characterization of Nash equilibrium strategies for the $N$-agent game is generally intractable. Nevertheless, we can construct an approximate Nash equilibrium for the $N$-agent system. The definition is given as follows.
	\begin{definition}[Approximate Nash equilibrium]
		An admissible strategy profile
		\[
		(\bm\pi^*,\bm c^*)
		=
		\left((\pi^{*,1},c^{*,1}),\ldots,(\pi^{*,N},c^{*,N})\right)
		\in \mathcal A_1\times\cdots\times\mathcal A_N
		\]
		is called a \(u_N\)-Nash equilibrium if there exists a sequence
		\(u_N\to0\) as \(N\to\infty\) such that, for every \(i=1,\ldots,N\),
		\[
		\sup_{(\pi^i,c^i)\in\mathcal A_i}
		J_i\left((\pi^i,c^i),(\bm\pi^*,\bm c^*)^{-i}\right)
		\leqslant
		J_i\left((\pi^{*,i},c^{*,i}),(\bm\pi^*,\bm c^*)^{-i}\right)
		+u_N .
		\]
		Here,
		\[
		(\bm\pi^*,\bm c^*)^{-i}
		:=
		\left(
		(\pi^{*,1},c^{*,1}),\ldots,
		(\pi^{*,i-1},c^{*,i-1}),
		(\pi^{*,i+1},c^{*,i+1}),\ldots,
		(\pi^{*,N},c^{*,N})
		\right).
		\]
	\end{definition}
	We first consider the limiting case as $N\to\infty$. Before introducing the representative agent model, we impose the following assumption throughout this paper.
	\begin{assumption}\label{Ao}
		For each \(i=1,\ldots,N\), define the type vector by
		\[
		\xi^i:=
		\left(x_0^i,\lambda_i,b_i,\sigma_i,\sigma_i^0,\varepsilon_i,\gamma_i,\theta_i\right)
		\in \mathcal O
		:=
		\mathcal K\times[\underline\gamma,\overline\gamma]\times[0,1],
		\]
		where \(\mathcal K\) is a compact subset of \((0,\infty)^6\), and
		\(0<\underline\gamma<\overline\gamma<1\). Let \(\mathcal B(\mathcal O)\)
		denote the Borel \(\sigma\)-algebra on \(\mathcal O\). Assume that there exists a constant vector
		\[
		\xi:=
		\left(x_0,\lambda,b,\sigma,\sigma^0,\varepsilon,\gamma,\theta\right)
		\in \mathcal O
		\]
		such that
		\[
		\nu_0^N:=\frac1N\sum_{i=1}^N\delta_{\xi^i}
		\Rightarrow
		\nu_0:=\delta_\xi
		\quad\text{as }N\to\infty,
		\]
		where \(\nu_0^N\) denotes the empirical probability measure on
		\(\mathcal B(\mathcal O)\), and ``\(\Rightarrow\)'' denotes weak convergence,
		that is, for every bounded continuous function \(f\),
		\[
		\int_{\mathcal O} f\,d\nu_0^N
		\to
		\int_{\mathcal O} f\,d\nu_0
		\quad\text{as }N\to\infty.
		\]
	\end{assumption}
	As $N\to\infty$, Assumption \ref{Ao} leads to the following SDE of the wealth process for the representative agent, 
	\begin{equation}\label{SDElim}
		\left\{
		\begin{aligned}
			&dX_t=\left(\pi_t b-c_t\right)X_t dt+\pi_t X_t\sigma dW_t+\pi_t X_t\sigma^0dW_t^0-\pi_t X_{t-}dM_t \\
			&X_0=x_0
		\end{aligned}
		\right.
	\end{equation}
	where $W=(W_t)_{t\in[0,T]}$ is a scalar Brownian motion independent of the Brownian motions $(W^0,W^1,\ldots,W^N)$, and $M=(M_t)_{t\in[0,T]}$ is given by $M_t=N_t-\lambda t$, with $N=(N_t)_{t\in[0,T]}$ being a Poisson process with intensity $\lambda$. Let $\mathbb{G}^M=(\mathcal{G}^M_t)_{t\in[0,T]}$ be the filtration generated by $W$, $W^0$, and $N$. Similar to Definition \ref{mathcalAi}, the admissible control set for the representative agent $\mathcal{A}_M$ is defined analogously, replacing the filtration $\mathbb{G}$ with $\mathbb{G}^M$.
	
	Since the influence of any single agent's strategy pair on the population is negligible, we replace the geometric averages of the wealth and consumption with given processes $(m,\Gamma)$, and the objective functional of the representative agent associated with an admissible control $(\pi,c)\in\mathcal A_M$ is defined by
	\begin{equation}\label{objectlim}
		J(\pi,c)=\mathbb{E}\left[\int_0^T U(c_t X_t(\Gamma_tm_t)^{-\theta};\gamma)dt+\varepsilon U(X_T m_T^{-\theta};\gamma)\right].
	\end{equation}
	
	In the mean field limit, agents are expected to exhibit symmetric and identical behavior. Consequently, the geometric mean of wealth and consumption $(m,\Gamma)$ must coincide with the conditional geometric averages generated by the optimal response of the representative agent. This idea leads to the consistency condition in mean field games, which forms the basis for the definition of a mean field equilibrium.
	\begin{definition}[Mean field equilibrium (MFE)]\label{MFE}
		For given $\mathbb{F}^0$-adapted processes $m$ and $\Gamma$, let $\pi^{*,m,\Gamma}=(\pi_t^{*,m,\Gamma})_{t\in[0,T]}$ and $c^{*,m,\Gamma}=(c_t^{*,m,\Gamma})_{t\in[0,T]}$ be the optimal control for the control problem \eqref{SDElim}--\eqref{objectlim}. Then, the pair $\bigl(\pi^{*,m^*,\Gamma^*},c^{*,m^*,\Gamma^*}\bigr)$ is called a mean field equilibrium if there exist $(m^*,\Gamma^*)$ such that
		\begin{enumerate}
			\item[(i)] $\bigl(\pi^{*,m^*,\Gamma^*},c^{*,m^*,\Gamma^*}\bigr)$ is an optimal control pair for the control problem \eqref{SDElim}--\eqref{objectlim} associated with $(m^*,\Gamma^*)$;
			
			\item[(ii)] $(m^*,\Gamma^*)$ satisfies the consistency condition
			\begin{equation}\label{consistency}
				\left\{
				\begin{aligned}
					m_t^*&=\exp\left\{\mathbb{E}\left[\ln\left(X_t^{*,m^*,\Gamma^*}\right)\mid\mathcal{F}_t^0\right]\right\},
					\quad t\in[0,T],\\
					\Gamma_t^*&=\exp\left\{\mathbb{E}\left[\ln\left(c_t^{*,m^*,\Gamma^*}\right)\mid\mathcal{F}_t^0\right]\right\},\quad t\in[0,T],
				\end{aligned}
				\right.
			\end{equation}
			where $X^{*,m^*,\Gamma^*}=(X_t^{*,m^*,\Gamma^*})_{t\in[0,T]}$ is the wealth process associated with $\pi^{*,m^*,\Gamma^*}$ and $c^{*,m^*,\Gamma^*}$.
		\end{enumerate}
		If, in addition, both $\pi^{*,m^*,\Gamma^*}$ and $c^{*,m^*,\Gamma^*}$ are deterministic, then $\bigl(\pi^{*,m^*,\Gamma^*},c^{*,m^*,\Gamma^*}\bigr)$ is called a deterministic mean field equilibrium.
	\end{definition}
	
	The continuous-time consumption and investment problem has been a central topic in mathematical finance since the seminal contribution of Merton \cite{Merton1969,Merton1971,Merton1976}. In these papers, Merton established the foundational framework for consumption and portfolio management, formulated the classical expected utility maximization problem, and incorporated jump-diffusion dynamics into asset price modeling. This line of research has since been extensively developed in multiple directions; see, for example, \cite{Cox1989, Davis1990, Dammon2001}. In the classical Merton model, agents focus only on their own performance, while in real markets, investment decisions are often influenced by peers. The economic motivation for relative performance concerns dates back to Veblen's theory of conspicuous consumption \cite{Veblen1899} and Duesenberry's relative income hypothesis \cite{Duesenberry1949}, and was later formalized by Abel \cite{Abel1990} through utility specifications involving relative consumption. Such criteria have been widely used to model competition among investors, fund managers, and financial institutions, see \cite{Brown1996,Chevalier1997,Kempf2008}. Since relative performance concerns couple agents' decisions, the resulting problems are naturally formulated as stochastic differential games. Several related contributions have considered the finite-player setting \cite{Browne2000,Basak2014,Espinosa2015}.

	As the number of players increases, the coupling among agents through relative performance terms may cause classical methods to suffer from the curse of dimensionality, making the direct characterization of Nash equilibria generally intractable. To overcome this difficulty, Lasry and Lions \cite{Lasry2007} introduced mean field game (MFG) theory, while a closely related approach was independently developed by Huang et al. \cite{HuangCainesMalhame2006}. The key idea of MFG theory is that, when the number of players is sufficiently large, the coupling interactions among individuals can be replaced by the interaction between a representative agent and the population distribution, which substantially reduces the complexity of the original game and provides a powerful framework for analyzing large-population systems. 
	So far, the MFG framework has since been applied to a wide range of areas; we refer the reader to \cite{Gueant2011,Carmona2018a,Carmona2018b,Lauriere2024} and the references therein. In large-population financial models with relative performance concerns, the MFG approach has proved particularly useful. Lacker and Zariphopoulou \cite{Lacker2019} studied optimal investment games under relative performance criteria and constructed explicit equilibrium strategies for both finite-agent games and their mean field limits under CARA and CRRA utilities. Lacker and Soret \cite{Lacker2020} extended this framework to a consumption-investment model and derived closed-form solutions for both the finite-agent game and the corresponding MFG under CRRA utilities. Bo et al. \cite{Bo2024b} studied optimal investment and consumption problems for large populations under external habit formation. They derived mean field equilibria under both linear and multiplicative external habit formation and showed that approximate Nash equilibria for the finite-agent games can be constructed from the corresponding mean field equilibria. Liang and Zhang \cite{Liang2024} further investigated an optimal investment-consumption problem with heterogeneous agents, where the average habit formation and average wealth of peers serve as performance benchmarks. They obtained closed-form mean field equilibria and constructed approximate Nash equilibria for the corresponding finite-agent game. 
	
	The above studies are mainly developed within the classical continuous-time diffusion framework without incorporating jump risk in asset prices. This limitation is relevant in financial markets, where asset prices may exhibit abrupt downward movements due to market crashes, liquidity shocks, or unexpected macroeconomic events. In the spirit of Merton's jump-diffusion framework, incorporating downward jump risk is thus a natural extension, especially for optimal investment and consumption problems with relative performance concerns. The inclusion of jumps also brings new challenges to the mean field game analysis. Benazzoli et al. \cite{Benazzoli2020} established the existence of Nash equilibria for MFGs with controlled jump-diffusion processes by using relaxed controls and martingale problem methods. Benazzoli et al. \cite{Benazzoli2019} further constructed approximate Nash equilibria for large finite-player games and derived convergence rates. In a financial mean field game with jump risk, Bo et al. \cite{Bo2024a} studied an optimal portfolio problem with common noise and contagious jump risk modeled by a nonlinear Hawkes process under CRRA relative performance, derived a deterministic MFE, and constructed an approximate Nash equilibrium for the finite-agent game.
	
	Another feature relevant to the present model is the interaction through agents' controls. In classical MFGs, population interactions are typically described through the distribution of agents' states, whereas mean field games of controls allow such interactions to depend additionally on their controls. This class of models is also referred to as extended mean field games in part of the literature. Gomes and Voskanyan \cite{Gomes2016} developed an extended deterministic mean field game framework incorporating the collective behavior of the population. Cardaliaguet and Lehalle \cite{Cardaliaguet2018} studied a mean field game of controls motivated by trade crowding. The convergence of large finite-player games to MFGs with interactions through controls was investigated by Lauri\`ere and Tangpi \cite{Lauriere2022}, while Djete \cite{Djete2023} established general existence and convergence results for mean field games of controls, including models with common noise. 
	
	Motivated by Lacker and Soret \cite{Lacker2020}, we incorporate downward jump risk into a relative consumption--investment game with both idiosyncratic and common noise. A central contribution of this paper is the development of a more realistic MFG model for joint consumption and investment under jump risk. Compared with the diffusion model of Lacker and Soret \cite{Lacker2020}, the presence of jump risk introduces additional nonlinearities into the equilibrium investment condition and makes the explicit characterization of a Nash equilibrium for the finite-player game generally intractable. Compared with the single-control investment model with jump risk studied by Bo et al. \cite{Bo2024a}, our model incorporates consumption as an additional control, while the population consumption strategies enter directly into the running relative-performance criterion. The resulting problem is therefore a two-control aggregative mean field game of controls with common noise and downward jump risk. To address the resulting difficulties, we proceed in two steps. First, under Assumption \ref{Ao}, we formulate the limiting optimal control problem for a representative agent and derive an explicit deterministic mean field equilibrium by applying the stochastic maximum principle. In particular, when the jump intensity vanishes, our result recovers the constant-type mean field equilibrium of Lacker and Soret \cite{Lacker2020}. We then examine the quantitative properties and parameter sensitivity of the resulting equilibrium. Finally, based on the obtained mean field equilibrium, we introduce an auxiliary control problem for each agent in the heterogeneous finite-player game and prove that the resulting strategy profile constitutes a \(u_N\)-Nash equilibrium, with \(u_N\to0\) as \(N\to\infty\).
	
	The main results of this paper are stated as follows.
	
	\begin{thm}\label{th3}
		There exists a deterministic MFE strategy $(\pi^*,c^*)\in\mathcal{A}_{M}$ such that
		$\pi_t^*=\pi^*$ for all $t\in[0,T]$, where the constant $\pi^*$ satisfies
		\[
		\begin{aligned}
			(\gamma-1)\left(\sigma^2+\left(\sigma^0\right)^2\right)\pi^*
			-\theta\gamma\left(\sigma^0\right)^2 \pi^* 
			-\lambda\left(\left(1-\pi^*\right)^{\gamma-1}-1\right)+b=0,
		\end{aligned}
		\]
		and
		\[
		c_t^*
		=
		\frac{
			\rho\varepsilon^{\frac{1}{\gamma(1-\theta)-1}}
			e^{\frac{\rho}{\gamma(1-\theta)-1}(t-T)}
		}{
			\rho
			+
			(\gamma(1-\theta)-1)
			\varepsilon^{\frac{1}{\gamma(1-\theta)-1}}
			\left(1-e^{\frac{\rho(t-T)}{\gamma(1-\theta)-1}}\right)
		},
		\qquad t\in[0,T],
		\]
		where
		\begin{equation}\label{rhoeq}
			\begin{aligned}
				\rho
				=&\left(1-\gamma(1-\theta)\right)(b+\lambda)\pi^*
				+\theta\gamma\lambda\ln(1-\pi^*)
				-\frac12\Bigl[\theta\gamma+(\gamma-1)(\gamma-2)\Bigr]\sigma^2(\pi^*)^2 \\
				&-\frac12\left(\gamma(1-\theta)-1\right)
				\left(\gamma(1-\theta)-2\right)(\sigma^0)^2(\pi^*)^2
				-\left[(1-\pi^*)^{\gamma-1}-1\right]\lambda .
			\end{aligned}
		\end{equation}
		The corresponding fixed point $\left( m^*,\Gamma^* \right)$ satisfying the consistency condition \eqref{consistency} for $t\in[0,T]$ is characterized by
		\begin{equation}\label{m^*}
			\left\{
			\begin{aligned}
				m^*_t
				&=x_0 \exp\left\{\int_0^t\left(\eta\left(\pi^*\right)-c_s^*-\frac{1}{2}\left(\sigma^0\pi^*\right)^2\right)ds+\int_0^t\sigma^0\pi^*dW_s^0\right\},\\
				\Gamma_t^*
				&=c_t^*,
			\end{aligned}
			\right.
		\end{equation}
		where
		\[
		\eta(x)=(b+\lambda)x-\frac12\sigma^2x^2+\lambda\ln(1-x).
		\]
	\end{thm}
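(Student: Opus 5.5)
The plan is to use a fixed-point ansatz. We restrict to candidate population processes of the form
\[
m_t=x_0\exp\Bigl\{\int_0^t a_s\,ds+\int_0^t\beta\,dW^0_s\Bigr\},\qquad \Gamma_t=\Gamma(t),
\]
with $a,\Gamma$ deterministic and $\beta$ a constant. For such $(m,\Gamma)$ we solve the representative agent's problem \eqref{SDElim}--\eqref{objectlim}, and then impose \eqref{consistency} to pin down $a$, $\beta$ and $\Gamma$.

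\textbf{Step 1 (individual problem).} We apply the stochastic maximum principle to \eqref{SDElim}--\eqref{objectlim}. The adjoint BSDE has solution $(Y,Z,Z^0,K)$, where $Z,Z^0$ multiply $dW,dW^0$ and $K$ multiplies $dM$. Guided by homogeneity, we make the ansatz
\[
Y_t=f(t)\,X_t^{\gamma-1}m_t^{-\theta\gamma},
\]
with $f$ deterministic and positive. It\^o's formula for jump processes then gives
\[
Z_t=(\gamma-1)\sigma\pi_tY_t,\qquad Z^0_t=\bigl((\gamma-1)\pi_t-\theta\gamma\beta\bigr)\sigma^0Y_t,\qquad K_t=\bigl((1-\pi_t)^{\gamma-1}-1\bigr)Y_{t-}.
\]
We maximize the Hamiltonian pointwise. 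The first-order condition in $c$ gives $c_t^{\gamma-1}(\Gamma_tm_t)^{-\theta\gamma}X_t^{\gamma}=X_tY_t$, hence
\[
c_t=\bigl(f(t)\,\Gamma_t^{\theta\gamma}\bigr)^{1/(\gamma-1)},
\]
which is deterministic. The first-order condition in $\pi$ reads $bY+\sigma Z+\sigma^0Z^0-\lambda K=0$. Dividing by $Y$ yields
\[
(\gamma-1)\bigl(\sigma^2+(\sigma^0)^2\bigr)\pi-\theta\gamma(\sigma^0)^2\beta/\pi\cdot\pi-\lambda\bigl((1-\pi)^{\gamma-1}-1\bigr)+b=0,
\]
which becomes the stated equation once $\beta=\sigma^0\pi^*$. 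Call the left-hand side $g(\pi)$. It is strictly decreasing on $(-\infty,1)$, with $g(0)=b>0$ and $g(\pi)\to-\infty$ as $\pi\uparrow1$. So there is a unique root $\pi^*\in(0,1)$, and for $\epsilon_0$ small it lies in $[D_0,1-\epsilon_0]$ (this requires $D_0\le0$, which the admissible set implicitly allows). Matching the $dt$-coefficients of $Y$ gives a nonlinear ODE for $f$ with terminal condition
\[
f(T)=\varepsilon\, m\text{-normalised},\quad\text{that is, } f(T)=\varepsilon.
\]

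\textbf{Step 2 (consistency).} With $\pi^*$ constant and $c$ deterministic, It\^o's formula gives
\[
\ln X_t=\ln x_0+\int_0^t\Bigl(b\pi^*-c_s+\lambda\pi^*-\tfrac12\bigl(\sigma^2+(\sigma^0)^2\bigr)(\pi^*)^2\Bigr)ds+\sigma\pi^*W_t+\sigma^0\pi^*W^0_t+\ln(1-\pi^*)N_t .
\]
Taking $\mathbb E[\,\cdot\mid\mathcal F^0_t]$ removes $W_t$, using its independence from $W^0$, and replaces $N_t$ by $\lambda t$. This produces exactly the formula \eqref{m^*} with $\eta$ as defined in Theorem \ref{th3}. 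It also forces $\beta=\sigma^0\pi^*$ and $a_s=\eta(\pi^*)-c_s-\tfrac12(\sigma^0\pi^*)^2$. Since $c$ is deterministic, $\Gamma^*=c^*$.

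\textbf{Step 3 (the ODE).} We substitute $a$, $\beta$ and $\Gamma=c=(f\,c^{\theta\gamma})^{1/(\gamma-1)}$ into the drift equation for $f$. Solving the relation $c=(f\,c^{\theta\gamma})^{1/(\gamma-1)}$ gives $c=f^{1/(\gamma(1-\theta)-1)}$. The $f$-equation then becomes a Bernoulli ODE of the form
\[
f'+\rho f+(1-\gamma(1-\theta))\,f^{1+1/(\gamma(1-\theta)-1)}\cdot(\text{const})=0,\qquad f(T)=\varepsilon,
\]
where collecting all constant coefficients produces $\rho$ in \eqref{rhoeq}. The $-c$ contribution, coming from $m$'s drift and the $X$ drift, is what makes the equation nonlinear. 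The substitution $u=f^{1/(1-\gamma(1-\theta))}$ linearizes it, and solving explicitly gives the stated $c^*_t$. The solution is positive and bounded because the denominator stays positive, since $\gamma(1-\theta)-1<0$. Hence $\int_0^Tc^*_t\,dt<\infty$ and $(\pi^*,c^*)\in\mathcal A_M$.

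\textbf{Main obstacle: sufficiency.} The key difficulty is proving that the SMP candidate is actually optimal. The Hamiltonian is not jointly concave in $(x,\pi,c)$ in multiplicative form. I would instead argue by verification: set
\[
V(t,x,m)=\tfrac1\gamma f(t)\,x^\gamma m^{-\theta\gamma},
\]
check via It\^o's formula that $V(t,X_t,m_t)+\int_0^tU(\cdot)\,ds$ is a supermartingale for every admissible control and a true martingale under $(\pi^*,c^*)$. The martingale property uses the boundedness of $\pi\in[D_0,1-\epsilon_0]$, which makes $X^\gamma$ and $m^{-\theta\gamma}$ have finite exponential moments, together with the concavity of the pointwise maximization in $(\pi,c)$ for $\gamma\in(0,1)$. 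For general admissible controls with unbounded $c$, the local martingale part needs a localization and Fatou argument, using the positivity of $V$.
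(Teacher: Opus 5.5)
Your route is the paper's: the SMP with the multiplicative ansatz $Y_t=f(t)X_t^{\gamma-1}m_t^{-\theta\gamma}$ (the paper writes $P=\varepsilon X^{\gamma-1}m^{-\theta\gamma}\varphi$, so $f=\varepsilon\varphi$), then the two first-order conditions, then consistency forcing $\beta=\sigma^0\pi^*$ and $\Gamma^*=c^*$, and finally a Bernoulli equation linearized by $u=1/c$. Where you depart from the paper, you improve on it. You obtain $\pi^*$ from the monotonicity of $g$ instead of a citation. Your HJB verification with $V=\frac1\gamma f x^\gamma m^{-\theta\gamma}$ also supplies sufficiency, which the paper never checks: it only remarks that $H$ is linear in $\pi$ and concave in $c$, and as you note that is not enough. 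The verification does go through, for three reasons. The HJB equation for $f$ coincides with the SMP one, because the two constant terms differ by $-\pi^*$ times the $\pi$-first-order condition. $V\ge0$ and $U\ge0$ make the local supermartingale a true supermartingale for every admissible $(\pi,c)$. Under the candidate, all coefficients are bounded.

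Several steps are nevertheless wrong or missing.
\begin{itemize}
\item You never show $\rho\neq0$, yet the displayed $c^*_t$ is $0/0$ when $\rho=0$. The paper proves $\rho<0$ by eliminating $b$ through the $\pi^*$-equation. This yields \eqref{eq:rho1}, whose quadratic part is strictly negative because $\pi^*>0$, while $\lambda F(1-\pi^*)\le0$ by $x^\beta\ge1+\beta\ln x$.
\item Your positivity argument is false as stated. For $\rho<0$ and $\gamma(1-\theta)-1<0$, the numerator and the denominator of the displayed formula are both negative. A robust argument: $u=1/c$ solves $u'=-1-\frac{\rho}{\gamma(1-\theta)-1}u$ with $u_T=\varepsilon^{-1/(\gamma(1-\theta)-1)}>0$. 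Such a $u$ stays positive and bounded away from zero backward on $[0,T]$, whichever sign the coefficient has.
\item The Bernoulli equation has a sign slip. At equilibrium $f'/f=\rho-(1-\gamma(1-\theta))c$, i.e.\ $f'-\rho f+(1-\gamma(1-\theta))f^{1+1/(\gamma(1-\theta)-1)}=0$ with constant $1$. Your $+\rho f$ would put $-\rho$ into the final formula.
\item $Z^0$ has a misplaced factor. The correct form is $Z^0_t=((\gamma-1)\sigma^0\pi_t-\theta\gamma\beta)Y_t$, not $((\gamma-1)\pi_t-\theta\gamma\beta)\sigma^0Y_t$. With $\beta=\sigma^0\pi^*$, your version produces $(\sigma^0)^3$ in the $\pi$-equation.
\end{itemize}
Identifying $\rho$ is the main computational content of the theorem, so you should carry out the drift matching rather than assert that the constants collect into \eqref{rhoeq}.
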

	
	\begin{thm}\label{Nashequ}
		Let Assumption \ref{Ao} hold. Then the strategy profile
		\((\bm\pi^*,\bm c^*)\) given by \eqref{pi*c*} is a
		\(u_N\)-Nash equilibrium for the \(N\)-agent game, with \(u_N\to0\) as \(N\to\infty\). More precisely, for every
		\(i=1,\ldots,N\),
		\begin{equation}\label{supJ}
			\sup_{(\pi^i,c^i)\in\mathcal A_i}J_i\left((\pi^i,c^i),(\bm\pi^*,\bm c^*)^{-i}\right)\leqslant J_i\left((\pi^{*,i},c^{*,i}),(\bm\pi^*,\bm c^*)^{-i}\right)+u_N.
		\end{equation}
	\end{thm}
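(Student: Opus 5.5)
The statement to prove is Theorem~\ref{Nashequ}. The profile \eqref{pi*c*} is not displayed above; I take it to be the natural one. Each agent $i$ uses the constant proportion $\pi^{*,i}$ and the deterministic consumption rate $c^{*,i}_t$, both obtained from Theorem~\ref{th3} with the type $\xi$ replaced by the individual type $\xi^i$.

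The plan is to reduce the best response of agent $i$ against the others to an auxiliary single-agent problem driven by exogenous benchmarks, and then compare those benchmarks with their mean field limits. Fix $i$ and let agent $i$ deviate to an arbitrary $(\pi^i,c^i)\in\mathcal A_i$, while every $j\neq i$ plays $(\pi^{*,j},c^{*,j})$. Because every $\pi^{*,j}$ and $c^{*,j}$ is deterministic, It\^o's formula gives $\ln X^j_T$ explicitly for $j\ne i$. The geometric averages then factor as
\[
\overline{X_T}=(X_T^i)^{1/N}\,\Big(\prod_{j\ne i}X^j_T\Big)^{1/N},
\qquad
\overline{c_tX_t}=(c_t^iX_t^i)^{1/N}\,\Big(\prod_{j\ne i}c^{*,j}_tX^j_t\Big)^{1/N}.
\]
Plugging this into \eqref{object_i}, agent $i$ maximizes a CRRA objective with modified exponent $\gamma_i(1-\theta_i/N)$ applied to its own wealth and consumption. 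The objective is multiplied by the exogenous random factors $Y^{N,i}_t:=\big(\prod_{j\ne i}c^{*,j}_tX^j_t\big)^{-\theta_i\gamma_i/N}$ and $Z^{N,i}_T:=\big(\prod_{j\ne i}X^j_T\big)^{-\theta_i\gamma_i/N}$. These factors depend only on $W^0$, on $W^j$ and $N^j$ for $j\ne i$, and on the deterministic controls.

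The auxiliary (limiting) problem for agent $i$ is the representative-agent problem \eqref{SDElim}--\eqref{objectlim} with type $\xi^i$ and benchmarks $(m^*,\Gamma^*)$ built from the limit type $\xi$ as in \eqref{m^*}. I call its value $V_i$. By the verification in Theorem~\ref{th3}, adapted to type $\xi^i$, the optimizer of this auxiliary problem is $(\pi^{*,i},c^{*,i})$. The key estimate is then
\[
\sup_{(\pi^i,c^i)\in\mathcal A_i}\big|J_i((\pi^i,c^i),(\bm\pi^*,\bm c^*)^{-i})-J^{\rm aux}_i(\pi^i,c^i)\big|\le u_N\to0 .
\]
Given this estimate, \eqref{supJ} follows with $2u_N$ by the chain
\[
J_i(\text{deviation})\le J^{\rm aux}_i+u_N\le J^{\rm aux}_i(\pi^{*,i},c^{*,i})+u_N\le J_i(\text{eq})+2u_N.
\]

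To obtain the key estimate, split the difference into three errors. The first comes from the self-interaction exponent $1/N$ on agent $i$'s own terms. The second comes from replacing $\frac1N\sum_{j\ne i}\ln X^j_t$ by $\ln m^*_t$. The third is the analogous replacement for consumption. For the second error, $\frac1N\sum_{j\ne i}\ln X^j_t$ equals an average of deterministic drift terms depending on $\xi^j$, plus $\frac1N\sum_{j\neq i}\sigma^0_j\pi^{*,j}W^0_t$, plus the idiosyncratic terms $\frac1N\sum_{j\ne i}\big(\sigma_j\pi^{*,j}W^j_t+\ln(1-\pi^{*,j})M^j_t\big)$. The idiosyncratic average is $O(N^{-1/2})$ in $L^2$, uniformly in $t$ by Doob's inequality. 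The type-dependent coefficients converge to their values at $\xi$ by Assumption~\ref{Ao}, since $\pi^*(\cdot)$ and $c^*(\cdot)$ depend continuously on the type. Continuity of $\pi^*$ follows from the implicit function theorem: the defining equation is strictly decreasing in $\pi^*$ because $\gamma<1$. Compactness of $\mathcal O$ makes all constants uniform.

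The hardest step is turning these log-level convergences into convergence of the utilities uniformly over all admissible deviations, since $c^i$ is unbounded and $X^i$ may have heavy moments. I would first use H\"older's inequality to bound the difference by $\|(c^i_tX^i_t)^{\gamma_i}\|_{L^{p}}$ times $\|e^{(\cdot)}-1\|_{L^{q}}$ of the benchmark errors. Exponential moments of the benchmark error are explicitly controlled, because these errors are Gaussian or compensated-Poisson functionals with bounded coefficients. The remaining danger is that $\sup_{\mathcal A_i}\mathbb E[(c^iX^i)^{p\gamma_i}]$ must stay bounded for some $p>1$ with $p\gamma_i<1$. I expect to get this from the budget constraint: $\int_0^Tc^i_tX^i_t\,dt+X^i_T\le x^i_0\exp(\text{bounded drift})\times$ a supermartingale, which holds because $\pi^i\in[D_0,1-\epsilon_0]$. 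Jensen's inequality with the concave power $p\gamma_i<1$ then gives a uniform bound depending only on $\mathcal O$, $D_0$ and $\epsilon_0$. This makes $u_N$ independent of the deviation and of $i$.
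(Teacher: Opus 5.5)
\textbf{Gap: the profile is misidentified.} Your architecture is the same as the paper's. You freeze the limit benchmarks $(m^*,\Gamma^*)$ of type $\xi$. You bound $J_i-\bar J_i$ uniformly over deviations, via the self-interaction error plus replacement of the empirical geometric averages by $m^*,\Gamma^*$ (the paper's $I_1^i,\dots,I_4^i$ and Lemma~\ref{EmEGamma}). You control $\sup_{\mathcal A_i}\mathbb E\int_0^T c^i_tX^i_t\,dt$ through the budget equation (Lemma~\ref{EcX}). You close with optimality of the profile in the auxiliary problem. The gap is in that last ingredient.

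You define $(\pi^{*,i},c^{*,i})$ as the mean field equilibrium of Theorem~\ref{th3} with $\xi$ replaced by $\xi^i$. You then claim that, ``by the verification in Theorem~\ref{th3} adapted to type $\xi^i$'', it optimizes the auxiliary problem whose benchmarks are built from $\xi$. This is false in general.
\begin{itemize}
\item In Theorem~\ref{th3} the benchmark is made consistent with the agent's own control. So the first-order condition contains $\theta\gamma(\sigma^0)^2\mathbb E[\pi_t\mid\mathcal F^0_t]$, and the terminal condition is $c_T=(\varepsilon\,c_T^{\theta\gamma})^{1/(\gamma-1)}$.
\item In the auxiliary problem the benchmark is exogenous, with $W^0$-volatility $\sigma^0\pi^*$ and $\Gamma^*_T=\varepsilon^{1/(\gamma(1-\theta)-1)}$.
\item Hence the best response (Lemma~\ref{lmhat}) has the cross term $\theta_i\gamma_i\sigma_i^0\sigma^0\pi^*$ instead of $\theta_i\gamma_i(\sigma_i^0)^2\pi^{*,i}$. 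Its terminal consumption is $\hat c^i_T=B_i=(\varepsilon_i(\Gamma^*_T)^{\theta_i\gamma_i})^{1/(\gamma_i-1)}$ instead of $\varepsilon_i^{1/(\gamma_i(1-\theta_i)-1)}$.
\end{itemize}
The two strategies coincide when $\xi^i=\xi$, but differ in general.

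\textbf{Consequence.} The middle inequality $J^{\rm aux}_i(\pi^i,c^i)\le J^{\rm aux}_i(\pi^{*,i},c^{*,i})$ of your chain is unjustified, and no estimate repairs it. Assumption~\ref{Ao} only requires $\nu^N_0\Rightarrow\delta_\xi$. So agent $1$ may keep a fixed type $\xi^1\neq\xi$ for every $N$ while all others have type $\xi$. Your own key estimate then shows that agent $1$'s deviation gain tends to $\sup_{\mathcal A_1}\bar J_1-\bar J_1(\text{your }(\pi^{*,1},c^{*,1}))$. This is generally strictly positive, so the profile you wrote down is not a $u_N$-Nash equilibrium.

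\textbf{Fix.} Define the profile as the best response to the frozen limit benchmarks: $\pi^{*,i}=\hat\pi^i$ and $c^{*,i}=\hat c^i$ from Lemma~\ref{lmhat}, derived by the stochastic maximum principle with $(m^*,\Gamma^*)$ exogenous. This is exactly \eqref{pi*c*}. Your continuity step must then be run for $\zeta\mapsto\Phi(\zeta)$ and $\zeta\mapsto\Psi_s(\zeta)$, which carry $\xi$ as a fixed parameter. Your implicit-function argument still applies, since $\partial_xG(x,\zeta)=(\gamma_\zeta-1)\bigl(\sigma_\zeta^2+(\sigma^0_\zeta)^2+\lambda_\zeta(1-x)^{\gamma_\zeta-2}\bigr)<0$. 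With this correction the rest of your plan reproduces the paper's proof. Your Doob-based uniform-in-$t$ control is only a minor variant of the paper's pointwise convergence plus dominated convergence.
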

	
	This paper is organized as follows.
	Section 2 provides the proof of Theorem \ref{th3} and discusses the case where the jump intensity vanishes. Section 3 presents numerical experiments to illustrate several quantitative properties and sensitivity results of the MFE. In Section 4, we construct an approximate Nash equilibrium for the $N$-player game based on the MFE of the representative agent.
	\section{Mean field equilibrium for the representative agent}
	Following Definition \ref{MFE}, we first solve the representative agent's optimal control problem with the $\mathbb{F}^0$-adapted processes $m$ and $\Gamma$ fixed. We then search for a fixed point satisfying the consistency conditions, thereby deriving the corresponding MFE. The detailed proof of Theorem \ref{th3} is presented below.
	\begin{proof}[Proof of Theorem~\ref{th3}]
		For fixed processes $m_t$ and $\Gamma_t$, by the Stochastic Maximum Principle (SMP) in \cite{Oksendal2014, Oksendal2019}, for $(t,x,\pi,c,p,q,q^0,y)\in[0,T]\times\mathbb{R}_+\times(-\infty,1-\epsilon_0]\times\mathbb{R}_+\times\mathbb{R}^4$, we introduce the Hamiltonian
		\begin{equation}\label{Hamiltonian}
			\begin{aligned}
				H(t,x,\pi,c,p,q,q^0,y):=&U\left(cx\left(\Gamma m\right)^{-\theta};\gamma\right)+\left( \pi b-c \right)xp\\&+\pi x\sigma q+\pi x\sigma^0 q^0-\pi x \lambda y.
			\end{aligned}
		\end{equation}
		Let $\pi^{m,\Gamma}$ and $c^{m,\Gamma}$ be admissible strategies depending on $m$ and $\Gamma$, and let $X^{m,\Gamma}$ denote the corresponding wealth process under $(\pi^{m,\Gamma},c^{m,\Gamma})$. Then, we reformulate the optimal control problem \eqref{SDElim}--\eqref{objectlim} as the following associated dual FBSDE:
		\begin{equation}\label{FBSDE_P}
			\left\{
			\begin{aligned}
				dX_t^{m,\Gamma}&=\left(\pi^{m,\Gamma}_t b-c^{m,\Gamma}_t\right)X^{m,\Gamma}_t dt+\pi^{m,\Gamma}_t X^{m,\Gamma}_t\sigma dW_t\\
				&\quad+\pi^{m,\Gamma}_t X^{m,\Gamma}_t\sigma^0dW_t^0-\pi^{m,\Gamma}_t X^{m,\Gamma}_{t-}dM_t,\\
				dP_t^{m,\Gamma}&= -\Bigg[ \frac{c^{m,\Gamma}_t}{\left(\Gamma m\right)^\theta}\left( \frac{c^{m,\Gamma}_t X^{m,\Gamma}_t}{\left(\Gamma m\right)^\theta} \right)^{\gamma-1}-c^{m,\Gamma}_t P^{m,\Gamma}_t\\ 
				&\quad+\pi^{m,\Gamma}_t \left( bP^{m,\Gamma}_t+\sigma Q^{m,\Gamma}_t+\sigma^0 Q_t^{0,m,\Gamma}-\lambda Y^{m,\Gamma}_t \right) \Bigg]dt\\
				&\quad+Q_t^{m,\Gamma}dW_t+Q_t^{0,m,\Gamma}dW^0_t+Y^{m,\Gamma}_t dM_t,\\
				X_0^{m,\Gamma}&=x_0,\\
				P_T^{m,\Gamma}&=\varepsilon(X_T^{m,\Gamma})^{\gamma-1}m_T^{-\theta\gamma}.
			\end{aligned}
			\right.
		\end{equation}
		Let $m_t,\Gamma_t$ be of the following form for $t\in[0,T]$:
		\begin{equation}\label{mGamma}
			m_t=\exp\left\{ \mathbb{E}\left[ \ln X_t^{m,\Gamma}|\mathcal{F}^0_t \right] \right\},\quad\Gamma_t=\exp\left\{\mathbb{E}\left[\ln c^{m,\Gamma}_t|\mathcal{F}^0_t\right]\right\}.
		\end{equation}
		Then, by It\^{o}'s formula, we have
		\begin{equation}\label{SDEm}
			\begin{aligned}
				dm_t&=m_t\left\{\left(b+\lambda\right)\mathbb{E}\left[\pi^{m,\Gamma}_t|\mathcal{F}_t^0\right]-\frac{1}{2}\left(\sigma^2+\left(\sigma^0\right)^2\right)\mathbb{E}\left[(\pi^{m,\Gamma}_t)^2|\mathcal{F}_t^0\right]-\mathbb{E}\left[c^{m,\Gamma}_t|\mathcal{F}_t^0\right]\right.\\ &\left.\quad+\frac{1}{2}\left( \sigma^0\mathbb{E}\left[ \pi^{m,\Gamma}_t|\mathcal{F}_t^0 \right] \right)^2+\lambda\mathbb{E}\left[ \ln\left( 1-\pi^{m,\Gamma}_{t-} \right)|\mathcal{F}_t^0 \right]\right\}dt+\sigma^0m_t\mathbb{E}\left[\pi^{m,\Gamma}_t|\mathcal{F}_t^0\right]dW_t^0\\
				&=:m_t\hat{\eta}(t,\pi^{m,\Gamma},c^{m,\Gamma})dt+\sigma^0m_t\mathbb{E}\left[\pi^{m,\Gamma}_t|\mathcal{F}_t^0\right]dW_t^0.
			\end{aligned}
		\end{equation}
		From the terminal condition of $P_t^{m,\Gamma}$, we assume that
		\begin{equation}\label{AssP}
			P^{m,\Gamma}_t=\varepsilon (X^{m,\Gamma}_t)^{\gamma-1} m_t^{-\theta \gamma}\varphi_t,\quad\varphi_T=1.
		\end{equation}
		With this assumption, the SDE of $X^{m,\Gamma}_t$ and \eqref{SDEm}, by It\^{o}'s formula, we have
		\begin{equation}\label{SDEP}
			\begin{aligned}
				dP^{m,\Gamma}_t=&P^{m,\Gamma}_t\Bigg\{\frac{\varphi_t'}{\varphi_t}+\left( \gamma-1 \right)\left(\left(b+\lambda\right)\pi^{m,\Gamma}_t-c^{m,\Gamma}_t\right)-\theta\gamma\hat{\eta}(t,\pi^{m,\Gamma},c^{m,\Gamma})\\
				&-\theta\gamma\left( \gamma-1 \right)\left(\sigma^0\right)^2\pi^{m,\Gamma}_t\mathbb{E}\left[\pi^{m,\Gamma}_t|\mathcal{F}_t^0\right]+\frac{1}{2}(\gamma-1)(\gamma-2)\left(\sigma^2+\left(\sigma^0\right)^2\right)\left(\pi^{m,\Gamma}_t\right)^2\\&+\frac{1}{2}\theta\gamma(\theta\gamma+1)\left(\sigma^0\right)^2\left( \mathbb{E}\left[\pi^{m,\Gamma}_t|\mathcal{F}_t^0\right] \right)^2+\left[\left(1-\pi^{m,\Gamma}_t\right)^{\gamma-1}-1\right]\lambda \Bigg\} dt\\&+(\gamma-1)P^{m,\Gamma}_t\pi^{m,\Gamma}_t\sigma dW_t+P^{m,\Gamma}_t\sigma^0\left[ (\gamma-1)\pi^{m,\Gamma}_t-\theta\gamma\mathbb{E}\left[\pi^{m,\Gamma}_t|\mathcal{F}_t^0\right] \right]dW_t^0\\
				&+P^{m,\Gamma}_t\left\{ \left(1-\pi^{m,\Gamma}_t\right)^{\gamma-1}-1 \right\}dM_t.
			\end{aligned}
		\end{equation}
		Let $\left(\pi^{*,m,\Gamma},c^{*,m,\Gamma}\right)$ be the candidate optimal control with given $(m,\Gamma)$, and let $\left(X^{*,m,\Gamma}_t,P^{*,m,\Gamma},Q^{*,m,\Gamma},Q^{0,*,m,\Gamma},Y^{*,m,\Gamma}\right)$ be the corresponding solution to the FBSDE \eqref{FBSDE_P}. Since the Hamiltonian is linear in \(\pi\) and strictly concave in \(c\in(0,\infty)\), the maximum condition leads to
		\begin{equation}\label{HCondition}
			\left\{
			\begin{aligned}
				b P^{*,m,\Gamma}_t+\sigma Q^{*,m,\Gamma}_t+\sigma^0 Q^{0,*,m,\Gamma}_t-\lambda Y^{*,m,\Gamma}_t&=0,\\
				\left( \frac{X^{*,m,\Gamma}_t c^{*,m,\Gamma}_t}{\left( \Gamma m \right)^{\theta}} \right)^{\gamma-1}\frac{X^{*,m,\Gamma}_t}{\left( \Gamma m \right)^{\theta}}-X^{*,m,\Gamma}_t P^{*,m,\Gamma}_t&=0.
			\end{aligned}
			\right.
		\end{equation}
		Comparing two SDEs of $P_t^{*,m,\Gamma}$ in \eqref{FBSDE_P} and \eqref{SDEP}, we obtain that
		\begin{equation}\label{QY}
			\left\{
			\begin{aligned}
				Q_t^{*,m,\Gamma}&=(\gamma-1)\sigma P_t^{*,m,\Gamma}\pi_t^{*,m,\Gamma},\\
				Y_t^{*,m,\Gamma}&=P_t^{*,m,\Gamma}\left[ \left( 1-\pi_t^{*,m,\Gamma} \right)^{\gamma-1}-1 \right],\\
				Q_t^{0,*,m,\Gamma}&=P_t^{*,m,\Gamma}\left\{ (\gamma-1)\pi_t^{*,m,\Gamma}\sigma^0-\theta\gamma\sigma^0 \mathbb{E}\left[ \pi^{*,m,\Gamma}_t|\mathcal{F}_t^0 \right] \right\}.
			\end{aligned}
			\right.
		\end{equation}
		By the first equation in \eqref{HCondition} and \eqref{QY}, it yields that
		$$
		\begin{aligned}
			(\gamma-1)\left(\sigma^2+\left(\sigma^0\right)^2\right)\pi_t^{*,m,\Gamma}-\theta\gamma\left(\sigma^0\right)^2 \mathbb{E}\left[ \pi^{*,m,\Gamma}_t|\mathcal{F}_t^0 \right]&\\
			-\lambda\left(\left(1-\pi_t^{*,m,\Gamma}\right)^{\gamma-1}-1\right)+b&=0.
		\end{aligned}
		$$
		Since we focus on a deterministic MFE, the above expression reduces to
		\begin{equation}\label{equpi}
			\begin{aligned}
				(\gamma-1)\left(\sigma^2+\left(\sigma^0\right)^2\right)\pi_t^{*,m,\Gamma}-\theta\gamma\left(\sigma^0\right)^2 \pi_t^{*,m,\Gamma}&\\
				-\lambda\left(\left(1-\pi_t^{*,m,\Gamma}\right)^{\gamma-1}-1\right)+b&=0
			\end{aligned}
		\end{equation}
		By Bo et al. \cite[Lemma 2.2]{Bo2024a}, the above equation admits a unique solution
		\(\pi_t^{*,m,\Gamma}\in(0,1-\epsilon_0]\) for all \(t\in[0,T]\), where \(\epsilon_0\in(0,1)\) is sufficiently small. Furthermore, this solution is independent of $m$ and $\Gamma$, and depends only on the constant parameter vector $\xi$. Hence,
		\[
		\pi_t^{*,m,\Gamma}=\pi^*=\phi(\xi),\quad t\in[0,T],
		\]
		where $\phi$ is a Lipschitz continuous function. Moreover, it is easy to verify that $\pi^*$ satisfies the
		admissibility condition. Thus, for $t\in[0,T]$, $\pi^*=\phi(\xi)$ is the best response control.
		
		Then, we proceed to the explicit form of the optimal control $c^{*,m,\Gamma}$. By comparing the $dt$-terms in \eqref{FBSDE_P} and \eqref{SDEP}, and using \eqref{HCondition}, we obtain the following ODE
		\begin{equation}\label{ODEphi}
			\frac{\varphi_t'}{\varphi_t}=\left( \gamma-1 \right)c_t^{*,m,\Gamma}-\theta\gamma \mathbb{E}[c_t^{*,m,\Gamma}|\mathcal{F}_t^0]+\rho,
		\end{equation}
		where
		\begin{equation}\label{rho}
			\begin{aligned}
				\rho
				=&\left(1-\gamma(1-\theta)\right)(b+\lambda)\pi^*
				+\theta\gamma\lambda\ln(1-\pi^*)-\frac12\Bigl[\theta\gamma+(\gamma-1)(\gamma-2)\Bigr]\sigma^2(\pi^*)^2 \\
				&-\frac12\left(\gamma(1-\theta)-1\right)\left(\gamma(1-\theta)-2\right)(\sigma^0)^2(\pi^*)^2-\left[(1-\pi^*)^{\gamma-1}-1\right]\lambda,
			\end{aligned}
		\end{equation}
		which depends only on the parameters of the control problem. We claim that \(\rho<0\). Indeed, it follows from \eqref{equpi} that,
		\[
		b=(1-\gamma)(\sigma^2+(\sigma^0)^2)\pi^*+\theta\gamma(\sigma^0)^2\pi^*
		+\lambda\big((1-\pi^*)^{\gamma-1}-1\big).
		\]
		Substituting this into \eqref{rho}, we obtain
		\begin{equation}\label{eq:rho1}
			\rho=-\frac{\gamma(\pi^*)^2}{2}\left([1-\gamma-\theta(1-2\gamma)]\sigma^2+(1-\theta)(1-\gamma+\gamma\theta)(\sigma^0)^2\right)+\lambda F(1-\pi^*),
		\end{equation}
		where
		\begin{equation}\label{eq:F}
			F(x)=1-a x^{\gamma-1}-(1-a)x^\gamma+\theta\gamma\ln x,\ x\in[\epsilon_0,1).
		\end{equation}
		and $a=\gamma(1-\theta)\in[0,1).$
		
		Since \(\gamma\in(0,1)\) and \(\theta\in[0,1]\),
		\[
		1-\gamma-\theta(1-2\gamma)\ge \min\{1-\gamma,\gamma\}>0.
		\]
		Hence, we have
		\begin{equation}\label{eq:1}
			-\frac{\gamma(\pi^*)^2}{2}\left([1-\gamma-\theta(1-2\gamma)]\sigma^2+(1-\theta)(1-\gamma+\gamma\theta)(\sigma^0)^2\right)<0.
		\end{equation}
		Since for each \(x>0\) and $\beta\in\mathbb{R}$,
		\[
		x^\beta\ge 1+\beta\ln x,
		\]
		it follows, by taking \(\beta=\gamma-1\) and \(\beta=\gamma\), that
		\[
		a x^{\gamma-1}+(1-a)x^\gamma
		\ge a\bigl(1+(\gamma-1)\ln x\bigr)+(1-a)\bigl(1+\gamma\ln x\bigr)
		=1+\theta\gamma\ln x.
		\]
		Consequently, \begin{equation}\label{eq:lamdaF}
			\lambda F(1-\pi^*)\le 0.
		\end{equation}
		Combining \eqref{eq:rho1}, \eqref{eq:1}, and \eqref{eq:lamdaF}, we conclude that
		\[
		\rho<0.
		\]
		
		From the expression of $P_t^{m,\Gamma}$ in \eqref{AssP} and the second equation in \eqref{HCondition}, we obtain
		\begin{equation}\label{phi}
			\begin{aligned}
				\varphi_t=\frac{\left(c_t^{*,m,\Gamma}\right)^{\gamma-1}}{\varepsilon\Gamma_t^{\theta\gamma}}.
			\end{aligned}
		\end{equation}
		Replacing \eqref{phi} into \eqref{ODEphi}, we can get the ODE for $c_t^{*,m,\Gamma}$ that
		\begin{equation}\label{ODEc}
			\left\{
			\begin{aligned}
				\frac{\bigl(c_t^{*,m,\Gamma}\bigr)'}{c_t^{*,m,\Gamma}}
				&=c_t^{*,m,\Gamma}+\frac{\theta\gamma}{\gamma-1}\left(\frac{\Gamma_t'}{\Gamma_t}-\mathbb{E}\left[c_t^{*,m,\Gamma}\mid\mathcal F_t^0\right]\right)+\frac{\rho}{\gamma-1},\\
				c_T^{*,m,\Gamma}
				&=\left(\varepsilon\,\Gamma_T^{\theta\gamma}\right)^{\frac{1}{\gamma-1}}.
			\end{aligned}
			\right.
		\end{equation}
		Since we focus on a deterministic MFE, we have,
		\begin{equation}\label{Gammac}
			\Gamma_t=c_t^{*,m,\Gamma},\quad t\in[0,T].
		\end{equation}
		Thus, the ODE in \eqref{ODEc} reduces to the following Bernoulli differential equation:
		\begin{equation}\label{ODEcast}
			\left\{
			\begin{aligned}
				&\frac{\left( c_t^{*,m,\Gamma} \right)'}{c_t^{*,m,\Gamma}}=c_t^{*,m,\Gamma}+\frac{\rho}{\gamma-\theta\gamma-1}\\
				&c_T^{*,m,\Gamma}=\varepsilon^{\frac{1}{\gamma-\theta\gamma-1}}
			\end{aligned}
			\right.
		\end{equation}
		Solving \eqref{ODEcast} yields
		\begin{equation*}
			\begin{aligned}
				c_t^{*,m,\Gamma}=\frac{\rho\varepsilon^{\frac{1}{\gamma(1-\theta)-1}}e^{\frac{\rho}{\gamma(1-\theta)-1}(t-T)}}{\rho+(\gamma(1-\theta)-1)\varepsilon^{\frac{1}{\gamma(1-\theta)-1}}(1-e^{\frac{\rho(t-T)}{\gamma(1-\theta)-1}})},\quad t\in[0,T].
			\end{aligned}
		\end{equation*}
		Clearly, $c_t^{*,m,\Gamma}$ is independent of $(m,\Gamma)$. Therefore, we simply write $c_t^{*,m,\Gamma}$ as $c_t^*$. Since $\rho<0$, one readily verifies that $c_t^*>0$ for all $t\in[0,T]$. In addition, Assumption \ref{Ao} implies that $c_t^*$ is bounded on $[0,T]$. Hence, $c^*=(c_t^*)_{t\in[0,T]}$ satisfies the
		admissibility condition, and therefore $c_t^*$ is the best response for $t\in[0,T]$.	
		
		Based on the ansatz for $P_t^*$ in \eqref{AssP}, the solutions to \eqref{QY} and \eqref{ODEphi} can be obtained explicitly, yielding closed-form expressions for $P_t^*$, $Q_t^*$, $Q_t^{0,*}$, and $Y_t^*$. Consequently, the adjoint processes are well defined. Moreover, since \eqref{mGamma} holds for any strategy $(\pi^{m,\Gamma},c^{m,\Gamma})\in\mathcal{A}_M$, the consistency conditions
		\begin{equation*}
			m_t^*
			=
			\exp\left\{
			\mathbb{E}\left[
			\ln X_t^{*}\mid\mathcal{F}^0_t
			\right]
			\right\},
			\qquad
			\Gamma_t^*
			=
			\exp\left\{
			\mathbb{E}\left[
			\ln c^{*}_t\mid\mathcal{F}^0_t
			\right]
			\right\}.
		\end{equation*}
		are readily verified.
		
		It then follows from \eqref{SDEm} and \eqref{Gammac} that
		\[
		\left\{
		\begin{aligned}
			m_t^*
			&=
			x_0
			\exp\left\{
			\int_0^t
			\left(
			\eta(\pi^*)-c_s^*
			-\frac12(\sigma^0\pi^*)^2
			\right)ds
			+
			\int_0^t\sigma^0\pi^*\,dW_s^0
			\right\},\\
			\Gamma_t^*
			&=
			c_t^*,
		\end{aligned}
		\right.
		\]
		where
		\begin{equation*}\label{eta}
			\eta(x)
			=
			(b+\lambda)x
			-\frac12\sigma^2x^2
			+\lambda\ln(1-x).
		\end{equation*}
		This completes the proof.
	\end{proof}
	\begin{rmk}[The case without jump risk]
		When the jump intensity vanishes, i.e., $\lambda=0$, the equilibrium investment strategy reduces to
		\[
		\pi^*
		=
		\frac{b}{
			(1-\gamma)\bigl(\sigma^2+(\sigma^0)^2\bigr)
			+\theta\gamma(\sigma^0)^2},
		\]
		which coincides with the equilibrium investment strategy obtained by Lacker and Soret \cite{Lacker2020} in the constant-type setting after identifying the corresponding notations. Moreover, the constant $\rho$ in \eqref{rhoeq} simplifies to
		\[
		\begin{aligned}
			\rho_0
			=&\left(1-\gamma(1-\theta)\right)b\pi^*
			-\frac12\Bigl[\theta\gamma+(\gamma-1)(\gamma-2)\Bigr]
			\sigma^2(\pi^*)^2  \\
			&-\frac12\left(\gamma(1-\theta)-1\right)
			\left(\gamma(1-\theta)-2\right)(\sigma^0)^2(\pi^*)^2 .
		\end{aligned}
		\]
		Using the same argument as in the proof of Theorem \ref{th3}, one readily verifies that $\rho_0<0$. Consequently, the equilibrium consumption strategy is given by
		\[
		c_t^*
		=
		\frac{
			\rho_0\varepsilon^{\frac{1}{\gamma(1-\theta)-1}}
			e^{\frac{\rho_0}{\gamma(1-\theta)-1}(t-T)}
		}{
			\rho_0
			+
			(\gamma(1-\theta)-1)
			\varepsilon^{\frac{1}{\gamma(1-\theta)-1}}
			\left(1-e^{\frac{\rho_0(t-T)}{\gamma(1-\theta)-1}}\right)
		},
		\qquad t\in[0,T].
		\]
		Again, after identifying the notation, this expression agrees exactly with the equilibrium consumption strategy derived in \cite{Lacker2020} for the constant-type case. Therefore, when $\lambda=0$, Theorem \ref{th3} recovers the mean field equilibrium of Lacker and Soret \cite{Lacker2020} with a constant type vector.
	\end{rmk}

	\section{Numerical analysis}
	In this section, we provide a numerical analysis of the deterministic MFE strategy $(\pi^*, c^*) \in \mathcal{A}_M$ derived in Theorem \ref{th3}. The sensitivity of the equilibrium investment strategy \(\pi^*\) to parameter changes follows directly from its explicit formula; see Bo et al. \cite[Lemma 3.1]{Bo2024a}. We therefore concentrate on the behavior of the equilibrium consumption policy $c_t^*$.
	\begin{table}[htbp]
		\centering
		\caption{Benchmark parameter values}
		\label{tab1}
		\renewcommand{\arraystretch}{1.15}
		\begin{tabularx}{0.94\textwidth}{
				>{\centering\arraybackslash}m{0.18\textwidth}
				>{\centering\arraybackslash}X
				>{\centering\arraybackslash}m{0.14\textwidth}
			}
			\toprule
			Model parameter & Financial meaning & Value \\
			\midrule
			$x_0$         & Initial wealth level                                         & $3.00$ \\
			$b$           & Expected return of risky assets                              & $0.80$ \\
			$\lambda$     & Poisson jump intensity                             & $0.50$ \\
			$\sigma$      & Idiosyncratic volatility of the stock                        & $0.80$ \\
			$\sigma^0$    & Common volatility                                            & $0.60$ \\
			$\gamma$      & $1-\gamma$ denotes the coefficient of relative risk aversion & $0.40$ \\
			$\theta$      & Competition weight                                           & $0.70$ \\
			$\varepsilon$ & Weight assigned to terminal wealth                           & $1.60$ \\
			$T$           & Terminal time (investment horizon)                           & $3.00$ \\
			\bottomrule
		\end{tabularx}
	\end{table}
	
	We begin by examining the sensitivity of the MFE consumption strategy $c_t^*$ to individual parameters, taking the benchmark values in Table \ref{tab1} as the baseline specification. The effects of parameter variations on $c_t^*$ can be classified into two categories. 
	
	First, as illustrated in Figure \ref{fig:c_sens_group1}, the consumption strategy $c_t^*$ increases with $\lambda$, $\theta$, $\sigma$, and $\sigma^0$. This behavior admits a natural economic interpretation. A higher jump intensity $\lambda$ increases the exposure of future wealth to adverse jump shocks, making investment less attractive and inducing the agent to consume more in the present. A larger competition weight parameter $\theta$ places greater weight on relative consumption performance, thereby strengthening the incentive for current consumption. In addition, higher values of the idiosyncratic volatility $\sigma$ and the common volatility $\sigma_0$ increase uncertainty in future wealth accumulation, prompting the agent to shift consumption toward the present in response to the elevated level of market risk.
	\begin{figure}[htbp]
		\centering
		\begin{subfigure}{0.47\textwidth}
			\centering
			\includegraphics[width=\textwidth]{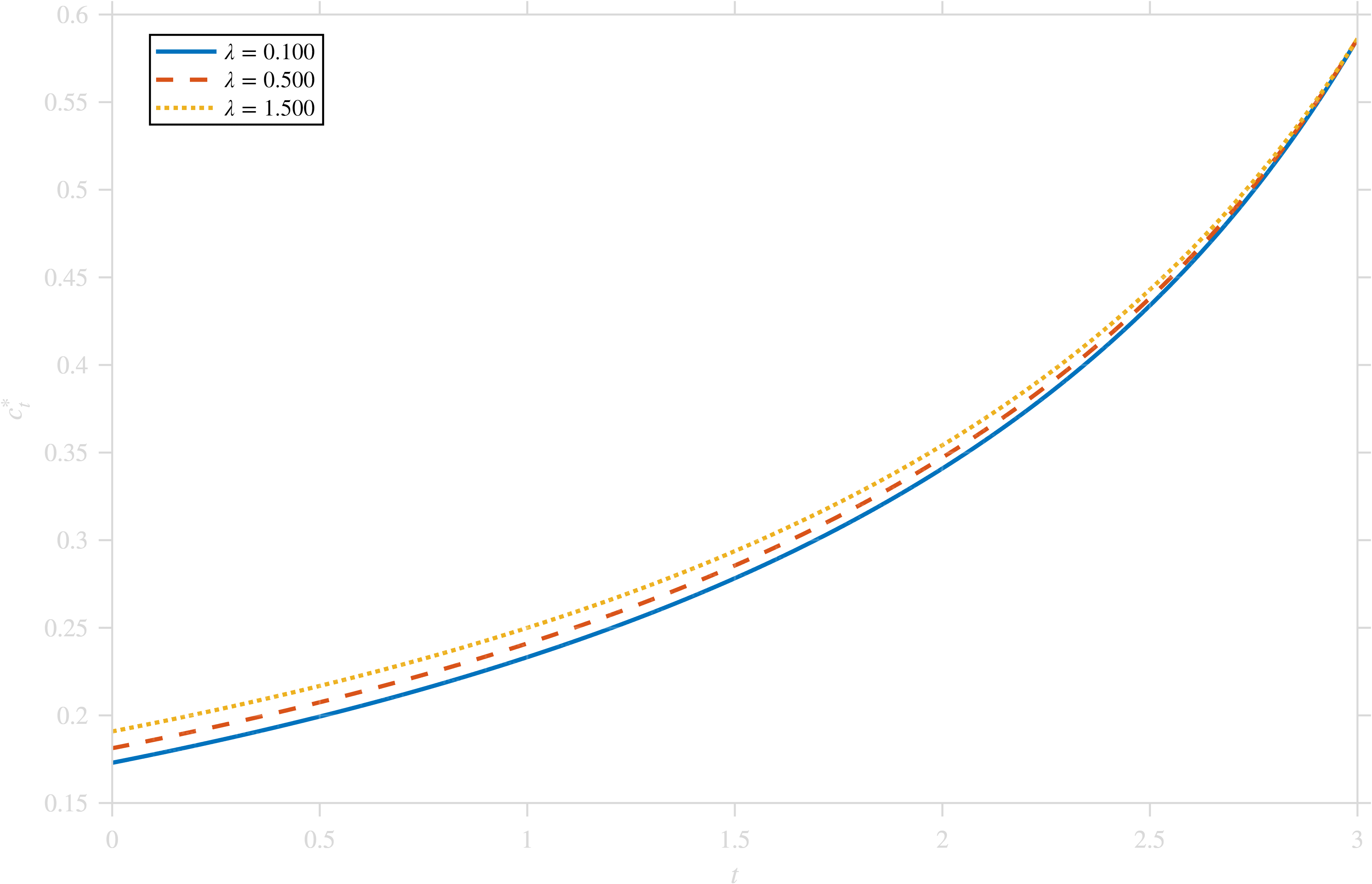}
		\end{subfigure}
		\hfill
		\begin{subfigure}{0.47\textwidth}
			\centering
			\includegraphics[width=\textwidth]{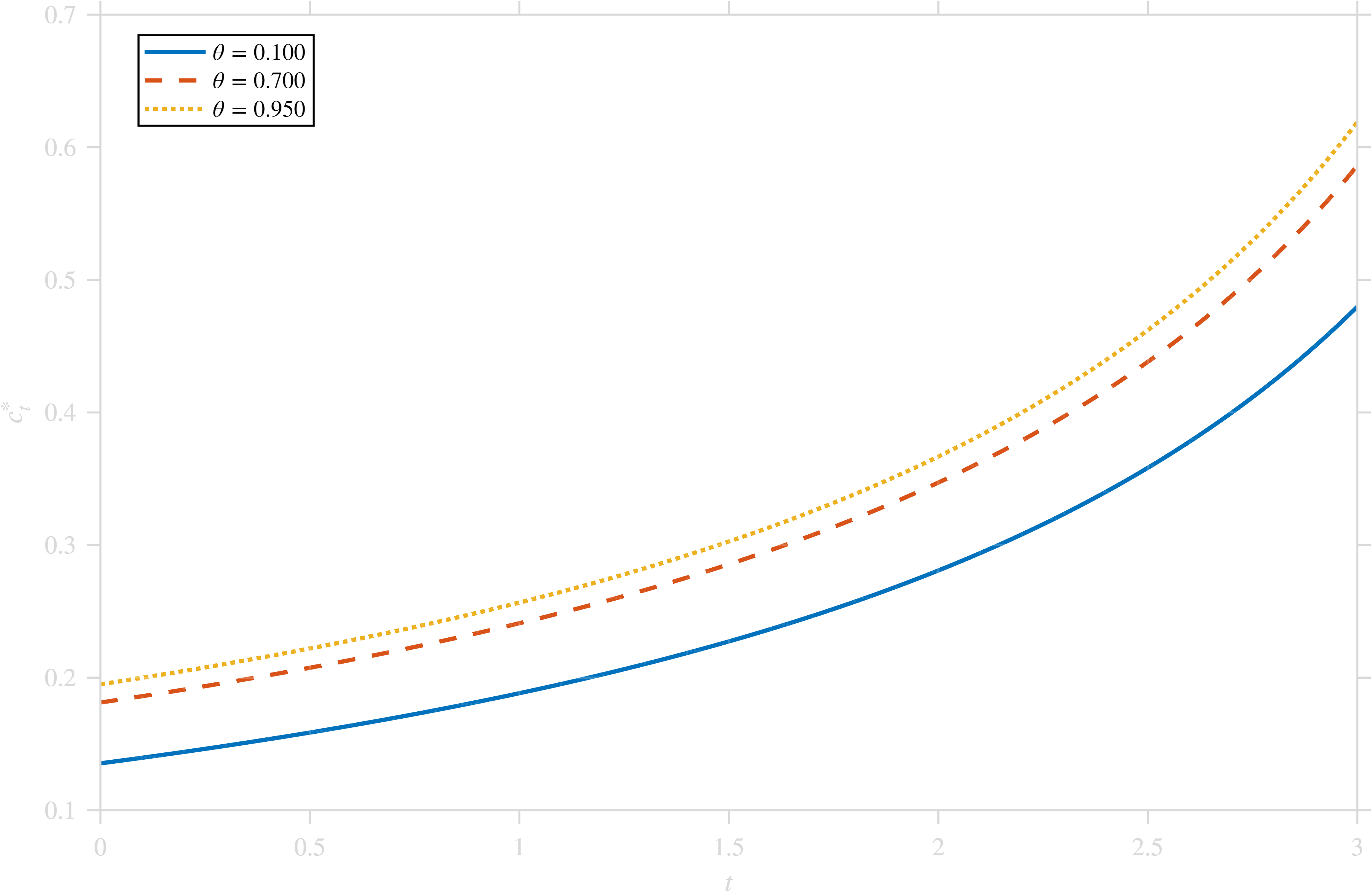}
		\end{subfigure}
		
		\vspace{0.35em}
		
		\begin{subfigure}{0.47\textwidth}
			\centering
			\includegraphics[width=\textwidth]{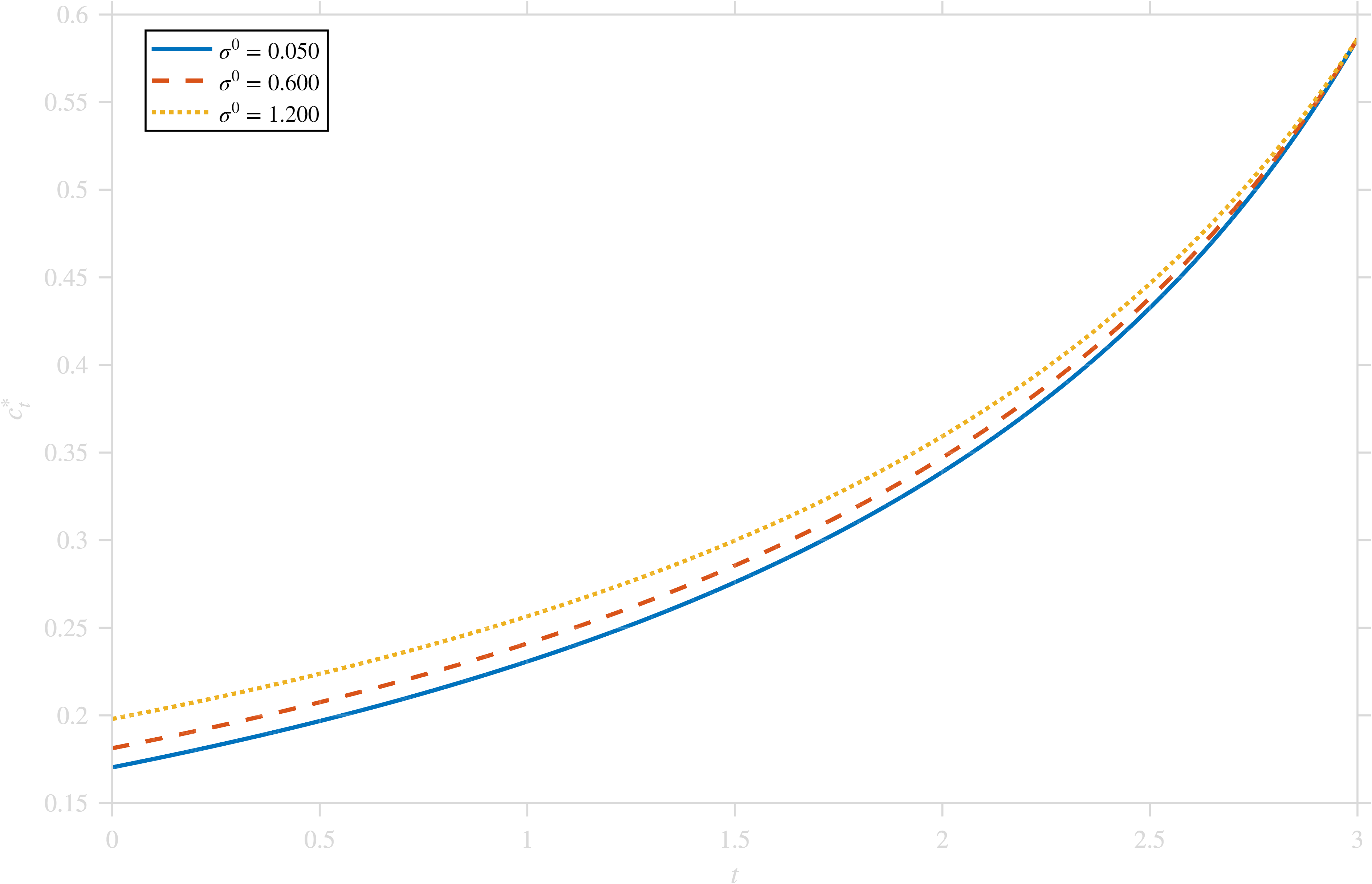}
		\end{subfigure}
		\hfill
		\begin{subfigure}{0.47\textwidth}
			\centering
			\includegraphics[width=\textwidth]{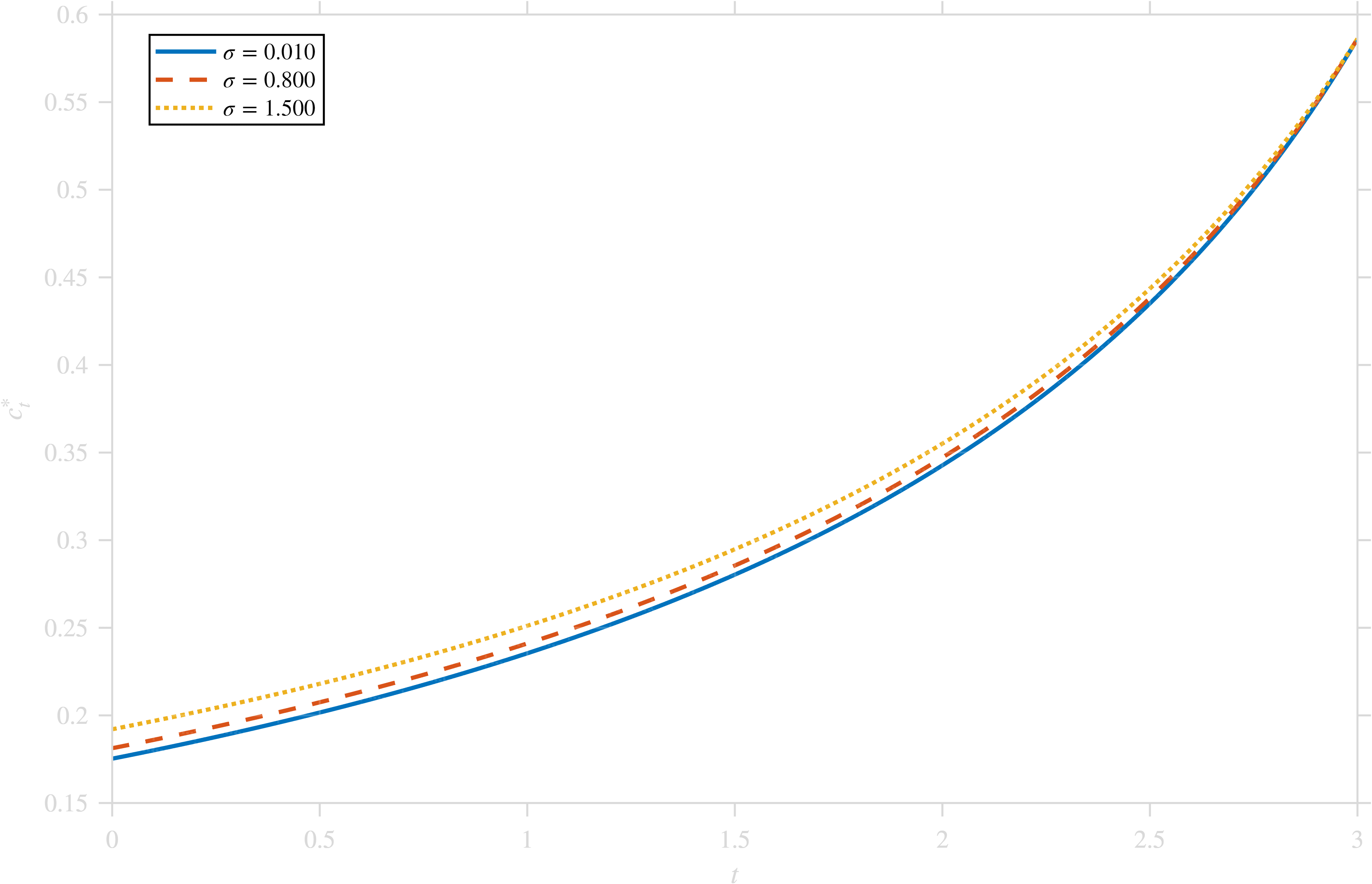}
		\end{subfigure}
		\caption{Mean field equilibrium strategy $c_t^*$ with respect to $\lambda$, $\sigma$, $\sigma^0$, and $\theta$}
		\label{fig:c_sens_group1}
	\end{figure}
	\begin{figure}[htbp]
		\centering
		\begin{subfigure}{0.32\textwidth}
			\centering
			\includegraphics[width=\textwidth]{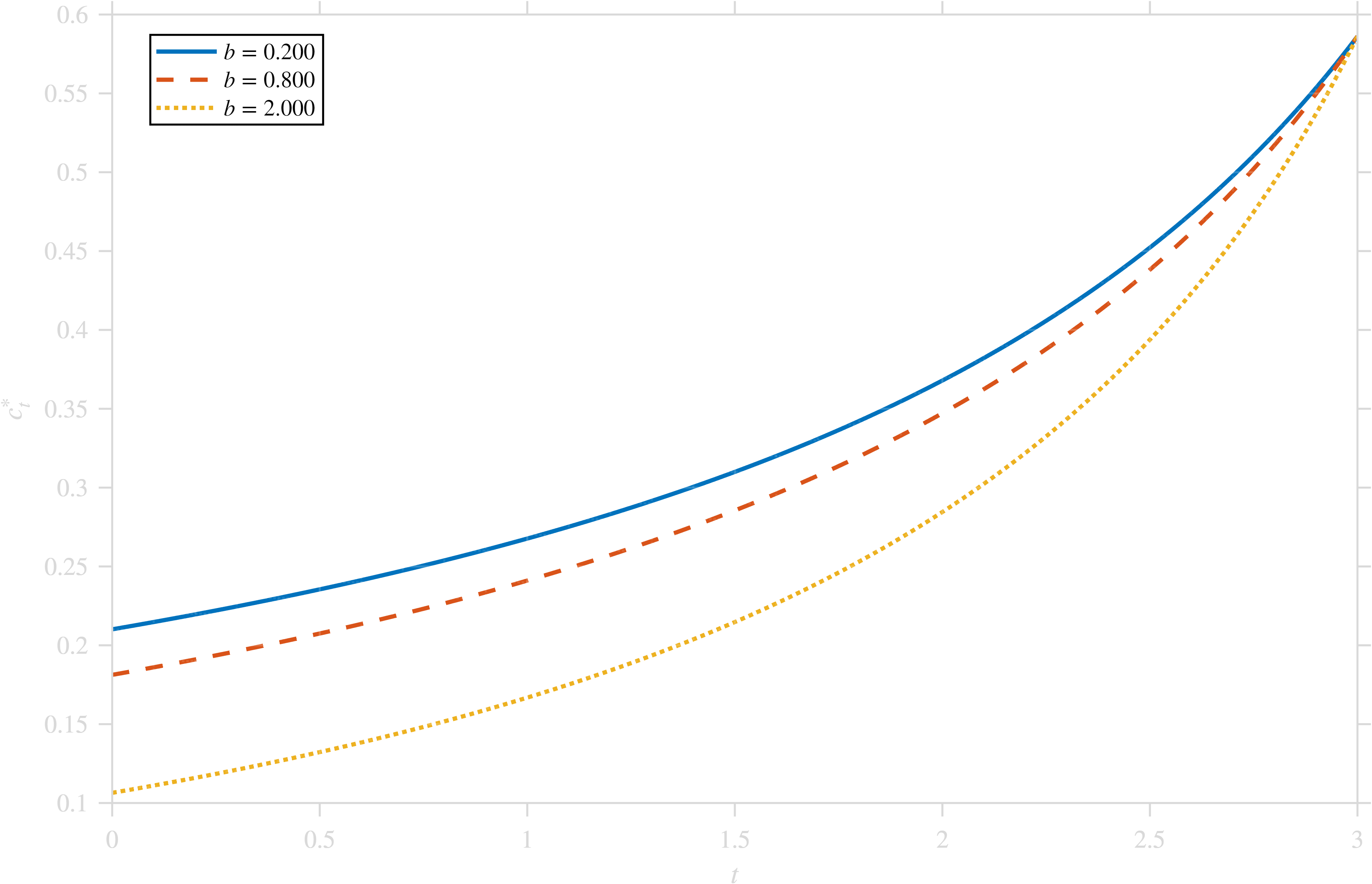}
		\end{subfigure}
		\hfill
		\begin{subfigure}{0.32\textwidth}
			\centering
			\includegraphics[width=\textwidth]{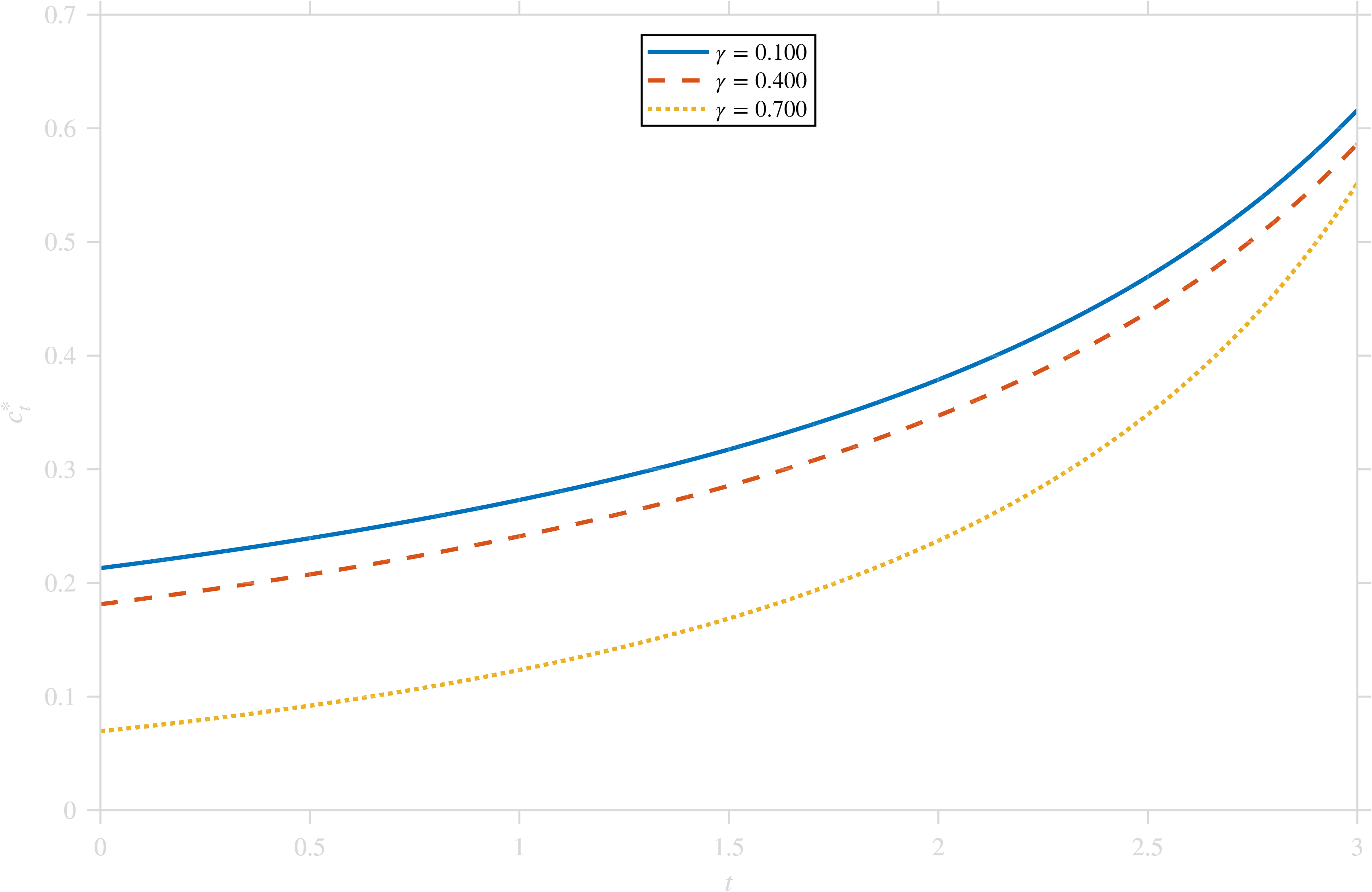}
		\end{subfigure}
		\hfill
		\begin{subfigure}{0.32\textwidth}
			\centering
			\includegraphics[width=\textwidth]{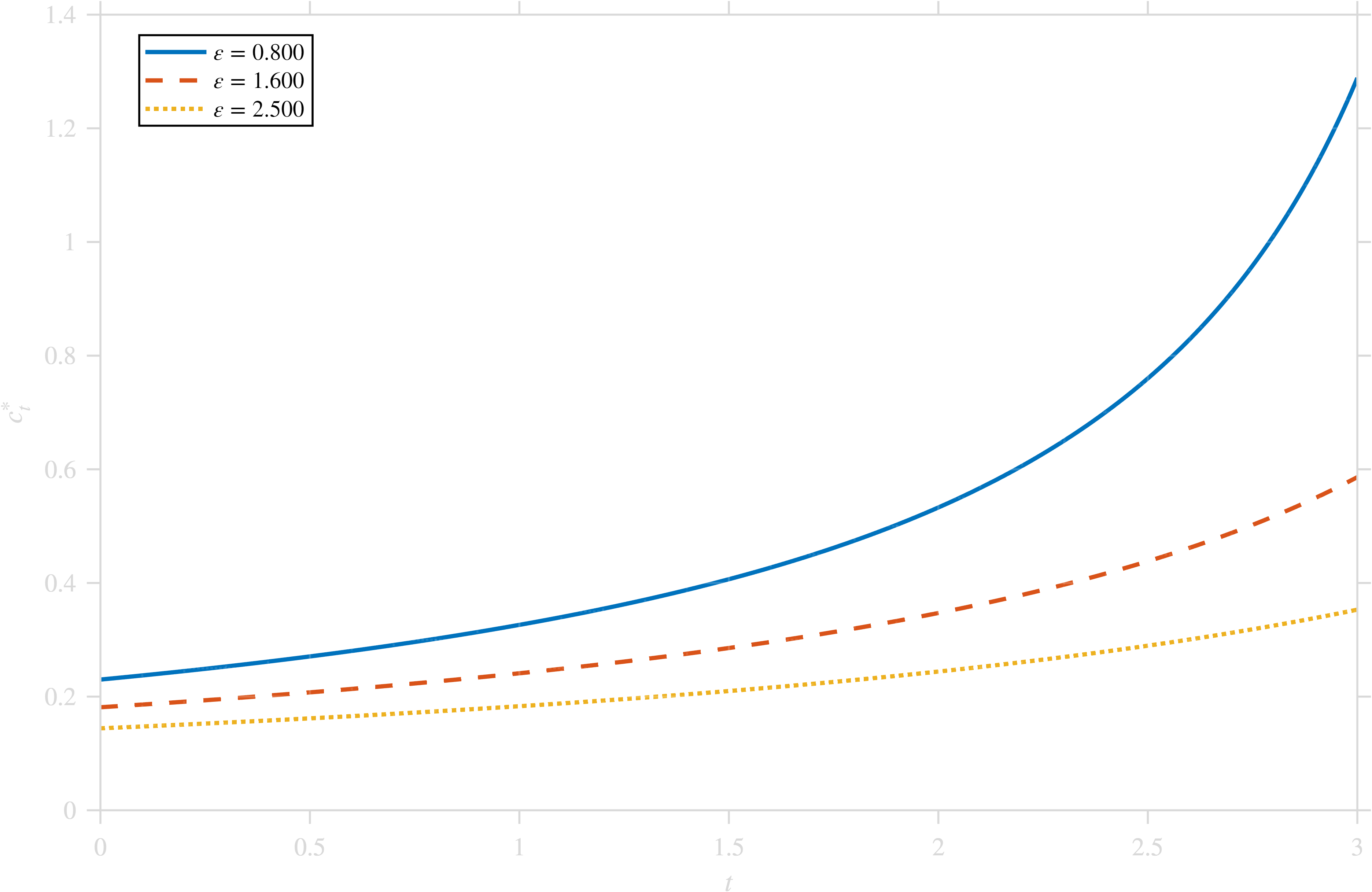}
		\end{subfigure}
		\caption{Mean field equilibrium strategy $c_t^*$ with respect to $b$, $\gamma$, and $\varepsilon$}
		\label{fig:c_sens_group2}
	\end{figure}
	
	By contrast, Figure \ref{fig:c_sens_group2} shows that $c_t^*$ decreases with $b$, $\gamma$, and $\varepsilon$. These monotone relationships are also consistent with standard economic intuition. A larger value of $b$ corresponds to a higher expected return on the risky asset, making investment opportunities more attractive and thereby reducing the incentive for current consumption. Furthermore, a smaller $\gamma$ corresponds to a higher degree of risk aversion. In this case, the agent adopts a more conservative investment strategy, which leads to a higher consumption rate. Finally, the effect of the terminal weight parameter $\varepsilon$ is straightforward: A larger value of $\varepsilon$ increases the importance of terminal wealth in the optimization objective and consequently lowers current consumption.
	
	In particular, all the curves of $c_t^*$ in Figures \ref{fig:c_sens_group1} and \ref{fig:c_sens_group2} are increasing over time. An intuitive explanation is that, in the earlier stage, the agent tends to preserve wealth to maintain future investment opportunities. As time passes, the additional gain from preserving wealth for future investment gradually diminishes, and the agent is therefore more willing to allocate wealth to current consumption.
	\begin{figure}[htbp]
		\centering
		\includegraphics[width=0.90\textwidth]{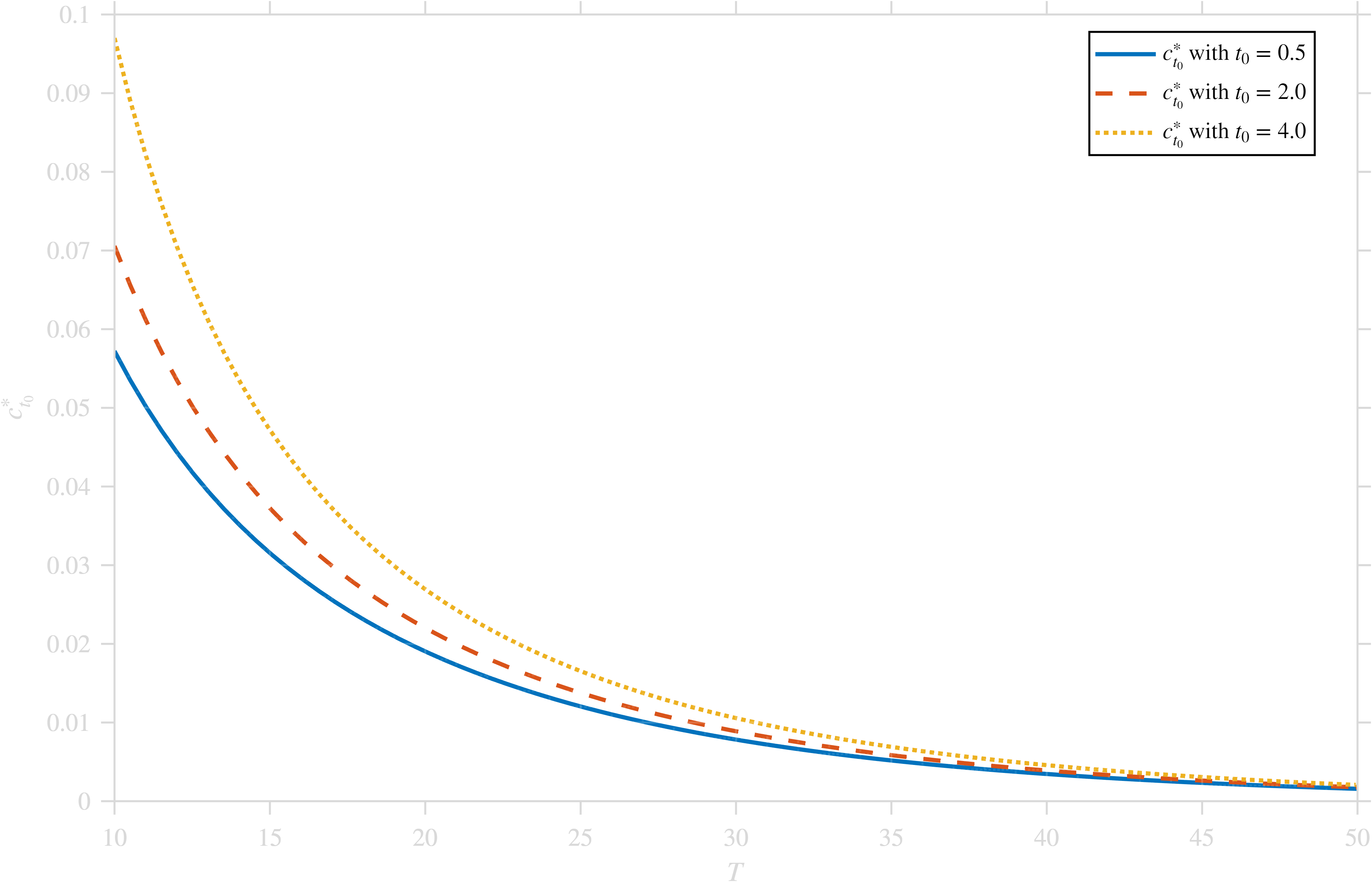}
		\caption{The effect of the terminal time $T$ on mean field equilibrium strategy $c_{t_0}^*$}
		\label{fig:long_horizon1}
	\end{figure}
	
	We now examine the effect of the investment horizon \(T\) on the MFE consumption strategy \(c_t^*\). As shown in Figure \ref{fig:long_horizon1}, for each fixed time \(t_0\), the consumption level \(c_{t_0}^*\) decreases as \(T\) increases. A longer planning horizon increases the continuation value of retained wealth, thereby raising the opportunity cost of current consumption. As a result, the agent has a stronger incentive to save and consume less in the present.
	
	Figure \ref{fig:long_horizon2} further shows that changes in $T$ affect only the trajectory of $c_t^*$, while leaving the terminal value $c_T^*$ unchanged. This property follows directly from the explicit expression for $c_t^*$. Indeed, by setting $t=T$, we obtain
	\begin{equation*}
		c_T^*=\varepsilon^{\frac{1}{\gamma-\theta\gamma-1}}.
	\end{equation*}
	Hence, $c^*_T$ depends only on the model parameters and is independent of $T$. Consequently, all curves in Figure \ref{fig:long_horizon2} attain the same terminal value.
	\begin{figure}[htbp]
		\centering
		\includegraphics[width=0.90\textwidth]{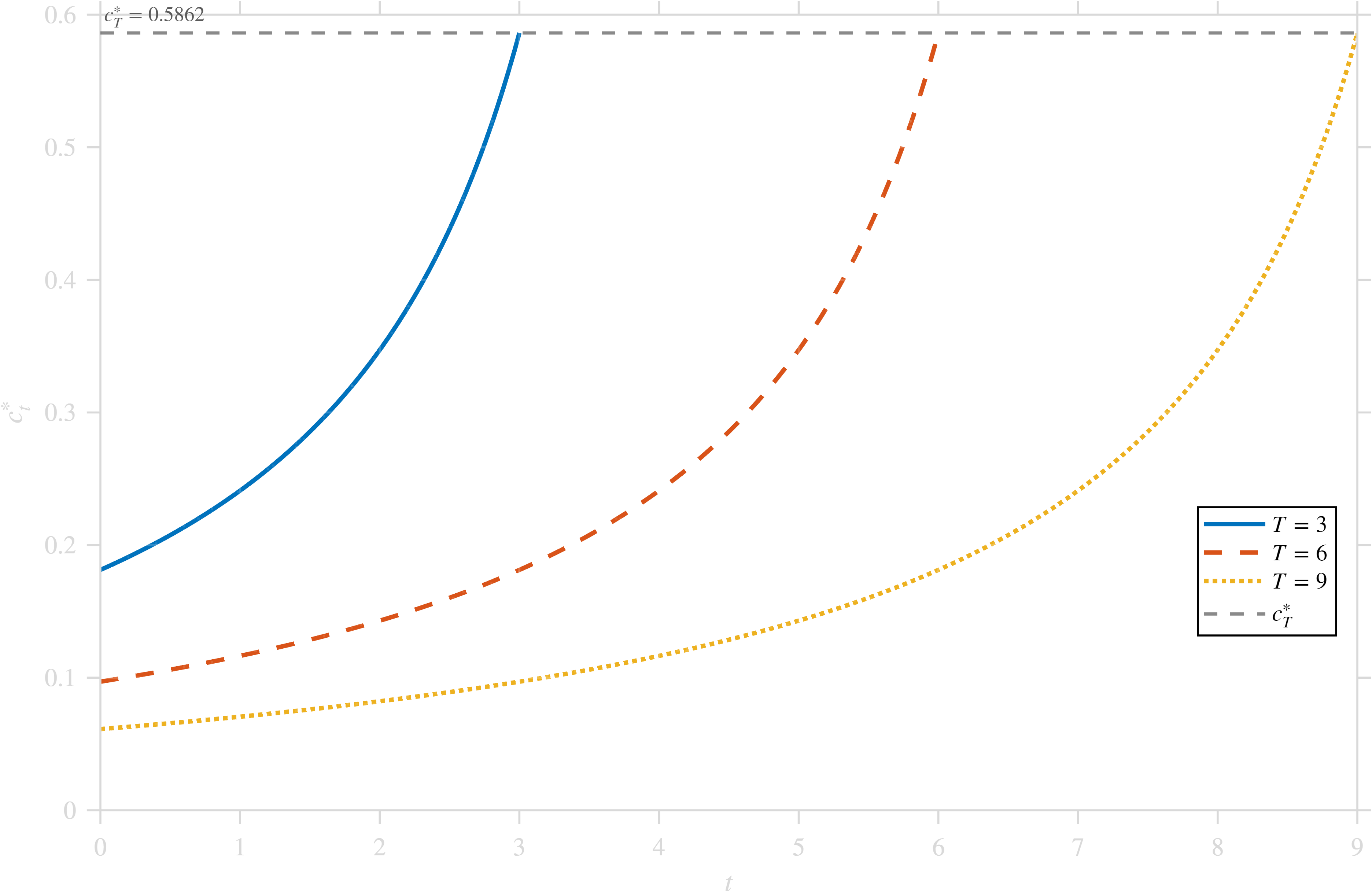}
		\caption{Independence of mean field equilibrium strategy $c^*_T$ from the terminal time $T$}
		\label{fig:long_horizon2}
	\end{figure}
	
	\section{Approximate Nash equilibrium in the N-player game}    
	Before constructing a $u_N$-Nash equilibrium for the $N$-player game, we introduce an auxiliary control problem based on the representative agent model, which plays a key role in the construction of approximate Nash equilibrium strategies. The auxiliary control problem is given by
	\begin{equation}\label{object_i*}
		\begin{aligned}
			&\sup_{(\pi^i,c^i)\in\mathcal{A}_i}\bar{J}_i((\pi^i,c^i);(m^*,\Gamma^*))\\            :=&\sup_{(\pi^i,c^i)\in\mathcal{A}_i}\mathbb{E}\left[\int_0^T U(c_t^iX_t^{i}(\Gamma^*_tm^*_t)^{-\theta_i};\gamma_i)dt+\varepsilon_iU(X_T^{i}(m^*_T)^{-\theta_i};\gamma_i)\right],
		\end{aligned}
	\end{equation}
	subject to
	\begin{equation*}
		\begin{cases}
			dX_t^{i} =\left(\pi_t^ib_i-c_t^i\right)X_t^{i} dt+\pi_t^i X_t^{i}\sigma_idW_t^i+\pi_t^i X_t^{i}\sigma_i^0dW_t^0-\pi_t^i X_{t-}^{i}dM_t^i, \\
			dm_t^* = m_t^* [\eta\left(\pi^*\right)-c_t^*]dt + \sigma^0 m_t^* \pi^* dW_t^0,
			\\d\Gamma_t^*=dc^*_t.
		\end{cases}
	\end{equation*}
	In this model, $m^*=(m_t^*)_{t\in[0,T]}$ and $\Gamma^*=(\Gamma_t^*)_{t\in[0,T]}$ are the fixed point obtained in the limiting case, and $\pi^*,c^*=(c_t^*)_{t\in[0,T]}$ are the deterministic MFE strategies. The following lemma provides the optimal strategy for the auxiliary control problem. Since the proof follows from the stochastic maximum principle and is similar to that of Theorem \ref{th3}, we omit the details.
	\begin{lem}\label{lmhat}
		Let $(\hat{\pi}^i,\hat{c}^i)\in\mathcal{A}_i$ be the optimal strategy of the auxiliary control problem, then for all $t\in[0,T]$, $\hat{\pi}^i_t=\hat{\pi}^i$ satisfies the following equation, 
		$$
		\begin{aligned}
			(\gamma_i-1)\left(\sigma_i^2+\left(\sigma_i^0\right)^2\right)\hat{\pi}^i-\theta_i\gamma_i\sigma_i^0 \sigma^0 \pi^* 
			-\lambda_i\left(\left(1-\hat{\pi}^i\right)^{\gamma_i-1}-1\right)+b_i=0
		\end{aligned}
		$$
		and 
		\[
		\hat c_t^i=
		\begin{cases}
			\dfrac{A_i B_i e^{A_i(t-T)}}{B_i(1-e^{A_i(t-T)})+A_i}, & A_i\ne0,\\
			\dfrac{B_i}{1+B_i(T-t)}, & A_i=0,
		\end{cases}
		\]
		where \(A_i\) and \(B_i\) are defined by
		$$
		\begin{aligned}
			A_i=\frac{1}{\gamma_i-1}[\hat{\rho}+\theta_i\gamma_i\frac{\rho}{\gamma-\theta\gamma-1}],B_i=\left(\varepsilon_i\varepsilon^{\frac{\theta_i\gamma_i}{\gamma-\theta\gamma-1}}\right )^{\frac{1}{\gamma_i-1}},
		\end{aligned}
		$$
		here
		\begin{equation*}
			\begin{aligned}
				\hat{\rho}=&-(\gamma_i-1)(b_i+\lambda_i)\hat{\pi}^i+\theta_i\gamma_i\eta(\pi^*)+\theta_i\gamma_i\left( \gamma_i-1 \right)\sigma^0_i\sigma^0\hat{\pi}^i\pi^*\\
				&-\frac{1}{2}\theta_i\gamma_i(\theta_i\gamma_i+1)\left(\sigma^0\right)^2\left(\pi^*\right)^2-\frac{1}{2}(\gamma_i-1)(\gamma_i-2)\left(\sigma_i^2+\left(\sigma_i^0\right)^2\right)\left(\hat{\pi}^i\right)^2\\
				&-\left[\left(1-\hat{\pi}^i\right)^{\gamma_i-1}-1\right]\lambda_i.
			\end{aligned}
		\end{equation*}
	\end{lem}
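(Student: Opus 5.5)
The plan follows the stochastic maximum principle argument from the proof of Theorem~\ref{th3}. The difference is that $(m^*,\Gamma^*)$ is now a given exogenous pair with explicit dynamics, so no fixed point has to be found.

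\textbf{Step 1: Hamiltonian and adjoint equation.} For agent $i$ I write the Hamiltonian as in \eqref{Hamiltonian}, with $(b,\sigma,\sigma^0,\lambda,\gamma,\theta)$ replaced by the agent-$i$ parameters and $(m,\Gamma)$ replaced by $(m^*,\Gamma^*)$. The adjoint BSDE has terminal value $P_T^i=\varepsilon_i(X_T^i)^{\gamma_i-1}(m_T^*)^{-\theta_i\gamma_i}$, which suggests the ansatz
\[
P_t^i=\varepsilon_i (X_t^i)^{\gamma_i-1}(m_t^*)^{-\theta_i\gamma_i}\varphi^i_t,\qquad \varphi^i_T=1,
\]
with $\varphi^i$ deterministic.

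\textbf{Step 2: Itô's formula and identification of coefficients.} I apply Itô's formula using $dm_t^*=m_t^*[\eta(\pi^*)-c_t^*]\,dt+\sigma^0 m_t^*\pi^*\,dW_t^0$.
\begin{itemize}
\item The $dW^i$, $dW^0$ and $dM^i$ coefficients give $Q^i=(\gamma_i-1)\sigma_i P^i\pi^i$, $Y^i=P^i[(1-\pi^i)^{\gamma_i-1}-1]$ and
\[
Q^{0,i}=P^i\bigl[(\gamma_i-1)\sigma_i^0\pi^i-\theta_i\gamma_i\sigma^0\pi^*\bigr].
\]
\item Here $W^0$ is shared, so the cross term between $X^i$ and $m^*$ produces $\sigma_i^0\sigma^0\hat\pi^i\pi^*$. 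This replaces the term $(\sigma^0)^2\pi\,\mathbb E[\pi|\mathcal F^0]$ of \eqref{SDEP}.
\item Inserting these into the first-order condition in $\pi$, $b_iP^i+\sigma_iQ^i+\sigma_i^0Q^{0,i}-\lambda_iY^i=0$, and dividing by $P^i$ gives exactly the stated equation for $\hat\pi^i$.
\item Existence and uniqueness of a root in $(0,1-\epsilon_0]$ follow as in \cite[Lemma 2.2]{Bo2024a}. The left-hand side is strictly decreasing in $\hat\pi^i$: the linear part has negative slope, and $-\lambda_i((1-x)^{\gamma_i-1}-1)$ is decreasing because $\gamma_i<1$. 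At $x=0$ the value is $b_i-\theta_i\gamma_i\sigma_i^0\sigma^0\pi^*$, and it tends to $-\infty$ as $x\uparrow1$.
\end{itemize}
\textbf{Main obstacle.} The delicate point is the value at $x=0$. If $b_i-\theta_i\gamma_i\sigma_i^0\sigma^0\pi^*<0$, the root is negative, and one then needs $D_0$ small enough to keep it admissible. I would handle this by taking $D_0$ below a uniform bound, which exists because $\mathcal O$ is compact.

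\textbf{Step 3: consumption and the ODE for $\varphi^i$.} The first-order condition in $c$ gives $(c_t^i)^{\gamma_i-1}(\Gamma_t^*m_t^*)^{-\theta_i\gamma_i}=P_t^i/(X_t^i)^{\gamma_i-1}$, hence
\[
\varphi^i_t=(\hat c_t^i)^{\gamma_i-1}\bigl(\varepsilon_i(\Gamma^*_t)^{\theta_i\gamma_i}\bigr)^{-1}.
\]
Matching the $dt$ terms gives
\[
\frac{(\varphi^i_t)'}{\varphi^i_t}=(\gamma_i-1)\hat c^i_t-\theta_i\gamma_i c^*_t+\hat\rho .
\]
Here $\hat\rho$ collects the constant drift terms: the $X^i$ drift $(\gamma_i-1)(b_i+\lambda_i)\hat\pi^i$, the $m^*$ drift $-\theta_i\gamma_i\eta(\pi^*)$, the second-order terms, the cross term, and the jump term. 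This matches the displayed $\hat\rho$ up to the sign convention.

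I then substitute $\varphi^i$ and use $\Gamma^*=c^*$ together with the Bernoulli ODE \eqref{ODEcast}, which gives $(c^*)'/c^*=c^*+\rho/(\gamma-\theta\gamma-1)$. The $\theta_i\gamma_i c^*$ terms cancel, leaving
\[
\frac{(\hat c^i_t)'}{\hat c^i_t}=\hat c^i_t+A_i,
\qquad
\hat c^i_T=\bigl(\varepsilon_i (c^*_T)^{\theta_i\gamma_i}\bigr)^{1/(\gamma_i-1)}=B_i,
\]
using $c_T^*=\varepsilon^{1/(\gamma-\theta\gamma-1)}$. This Bernoulli equation is solved by substituting $u=1/\hat c^i$, which gives $u'=-1-A_iu$. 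This yields the two displayed cases $A_i\neq0$ and $A_i=0$.

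\textbf{Step 4: admissibility and sufficiency.} Positivity and boundedness of $\hat c^i$ on $[0,T]$ need the denominator to be positive. For $A_i=0$ this is immediate since $B_i>0$. For $A_i\ne0$ I would check that $B_i(1-e^{A_i(t-T)})+A_i$ has the sign of $A_i$, which holds for both signs because $B_i>0$. Optimality then follows from the sufficient SMP: the Hamiltonian is concave in $(x,\pi,c)$ along the log-transformed state, so the same verification as in Theorem~\ref{th3} applies.
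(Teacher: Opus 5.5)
Your proposal is correct and follows the same stochastic-maximum-principle route the paper intends; the paper omits this proof as ``similar to that of Theorem~\ref{th3}''. The following all check out: the adjoint coefficients with the cross term $\sigma_i^0\sigma^0\hat\pi^i\pi^*$, the vanishing drift giving $(\varphi^i)'/\varphi^i=(\gamma_i-1)\hat c^i-\theta_i\gamma_i c^*+\hat\rho$, the cancellation of the $c^*$ terms via \eqref{ODEcast}, the terminal value $B_i$, and the positivity of $\hat c^i$ for either sign of $A_i$. The only inaccurate remark is the justification in Step~4, because $H$ is not jointly concave in $(x,\pi,c)$ and in the variable $\ln x$ the power utility is convex; sufficiency does hold, e.g.\ via Arrow's condition, since the $\pi$-coefficient $b_i\hat P+\sigma_i\hat Q+\sigma_i^0\hat Q^0-\lambda_i\hat Y$ vanishes and $c$ enters $H$ only through $cx$, so the maximized Hamiltonian is constant in $x$ (a verification the paper itself does not carry out).
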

	
	By Bo et al. \cite[Lemma 4.2]{Bo2024a}, there exists a unique solution \(\hat{\pi}^i\in [D_0,1-\epsilon_0]\). We define this solution as $\hat{\pi}^i=\phi_i(\xi,\xi^i)$, which only depends on parameters $\xi$ and $\xi^i$. Analogously, we define $\hat{c}^i_t=\varphi_i(t,\xi,\xi^i)$.
	
	We then construct optimal strategies for the system with $N$ agents and verify that these strategies constitute an approximate Nash equilibrium. For $i=1,\cdots,N$, we recall the  objective functional \eqref{object_i} of agent $i$:
	\begin{equation}\label{object_i1}
		J_i((\pi^i,c^i),(\bm{\pi},\bm{c})^{-i})=\mathbb{E}\left[\int_0^T U(c_t^iX_t^i(\overline{c_tX_t})^{-\theta_i};\gamma_i)dt+\varepsilon_iU(X_T^i\overline{X_T}^{-\theta_i};\gamma_i)\right],
	\end{equation}
	where $(\bm{\pi},\bm{c})^{-i}:=\left((\pi^1,c^1),\ldots,(\pi^{i-1},c^{i-1}),(\pi^{i+1},c^{i+1}),\ldots,(\pi^N,c^N)\right)$. Subsequently, for $i=1,\cdots,N$, we consider a strategy of agent $i$
	\begin{equation}\label{pi*c*}
		\pi^{*,i}=\phi_i(\xi,\xi^i),\ c^{*,i}_t=\varphi_i(t,\xi,\xi^i),\qquad t\in[0,T].
	\end{equation}
	Then the SDE system for agent $i$ under strategy $\pi^{*,i}$ and $c^{*,i}_t$ is given by
	\begin{equation}\label{pic*}
		\begin{aligned}
			dX_t^{*,i} =\left(\pi^{*,i}b_i-c^{*,i}_t\right)X_t^{*,i} dt+\pi^{*,i} X_t^{*,i}\sigma_idW_t^i+\pi^{*,i} X_t^{*,i}\sigma_i^0dW_t^0-\pi^{*,i} X_{t-}^{*,i}dM_t^i.
		\end{aligned}
	\end{equation}
	
	To prove Theorem \ref{Nashequ}, we first establish the following lemma.
	\begin{lem}\label{mGamma-cX}
		Let Assumption \ref{Ao} hold. Then, for any $p\in\mathbb{R}$, there exist two constants $C_p^{*,1},C_p^{*,2}$ such that, for $t\in[0,T]$,
		\begin{equation}
			\mathbb{E}\left[\left(m^*_t\right)^p\right]\leqslant C_p^{*,1},\quad \mathbb{E}\left[\left(\Gamma^*_t\right)^p\right]\leqslant C_p^{*,2}.
		\end{equation}
	\end{lem}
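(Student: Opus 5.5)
We prove the two moment bounds separately, using the explicit formulas from Theorem \ref{th3}. The bound for $\Gamma^*$ is elementary because $\Gamma^*_t=c^*_t$ is deterministic. The bound for $m^*$ comes from computing the $p$-th moment of a log-normal random variable.

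\emph{Bound for $\Gamma^*$.} By \eqref{m^*}, $\mathbb{E}[(\Gamma_t^*)^p]=(c_t^*)^p$, so it suffices to show that $c^*$ is continuous on $[0,T]$ and bounded above and below by strictly positive constants. Write $k:=\gamma(1-\theta)-1<0$ and recall from the proof of Theorem \ref{th3} that $\rho<0$. Then $\rho/k>0$, and since $t-T\le 0$ we get $e^{\rho(t-T)/k}\in(0,1]$.

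\begin{itemize}
\item The numerator $\rho\varepsilon^{1/k}e^{\rho(t-T)/k}$ is strictly negative and continuous in $t$.
\item The denominator $\rho+k\varepsilon^{1/k}\bigl(1-e^{\rho(t-T)/k}\bigr)$ is the sum of $\rho<0$ and a nonpositive term. Hence it is bounded above by $\rho<0$, and in particular it never vanishes.
\end{itemize}

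So $c^*_t$ is a continuous, strictly positive function on the compact interval $[0,T]$. Therefore
\[
0<\underline c:=\min_{t\in[0,T]}c_t^*\le c_t^*\le \overline c:=\max_{t\in[0,T]}c_t^*<\infty .
\]
Taking $C_p^{*,2}:=\max\{\underline c^{\,p},\overline c^{\,p}\}$ covers both signs of $p$.

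\emph{Bound for $m^*$.} From \eqref{m^*},
\[
(m_t^*)^p=x_0^p\exp\Bigl\{p\int_0^t\bigl(\eta(\pi^*)-c_s^*-\tfrac12(\sigma^0\pi^*)^2\bigr)ds\Bigr\}\exp\{p\sigma^0\pi^*W_t^0\}.
\]
The first exponential is deterministic. Since $W^0_t\sim\mathcal N(0,t)$, the Gaussian moment generating function gives
\[
\mathbb{E}\bigl[e^{p\sigma^0\pi^*W^0_t}\bigr]=e^{\frac12p^2(\sigma^0\pi^*)^2t}.
\]
Hence
\[
\mathbb{E}[(m_t^*)^p]=x_0^p\exp\Bigl\{p\int_0^t\bigl(\eta(\pi^*)-c_s^*\bigr)ds+\tfrac12(p^2-p)(\sigma^0\pi^*)^2t\Bigr\}.
\]
To bound this uniformly in $t\in[0,T]$ we need each ingredient to be finite:
\begin{itemize}
\item $\pi^*\in(0,1-\epsilon_0]$, so $\ln(1-\pi^*)\ge\ln\epsilon_0$ and $\eta(\pi^*)$ is a finite constant.
\item $0<c^*_s\le\overline c$ by the first step.
\item $x_0>0$ and $\sigma^0$ are fixed constants.
\end{itemize}
Therefore
\[
\mathbb{E}[(m_t^*)^p]\le x_0^p\exp\bigl\{|p|T(|\eta(\pi^*)|+\overline c)+\tfrac12|p^2-p|(\sigma^0\pi^*)^2T\bigr\}=:C_p^{*,1}.
\]

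The only step needing care is the strict negativity of the denominator of $c^*$, which rules out blow-up of $c^*$ and gives its positive lower bound; this follows directly from $\rho<0$ and $k<0$. Assumption \ref{Ao} is used only to guarantee that the limiting parameters $\xi$ lie in $\mathcal O$, so that all of these quantities are well defined and finite.
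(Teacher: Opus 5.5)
Your proof is correct, and for $\Gamma^*$ it matches the paper: both rely on continuity and strict positivity of $c^*$ on $[0,T]$. You also supply the sign check on the numerator and denominator of $c^*_t$ (using $\rho<0$ and $\gamma(1-\theta)-1<0$), which the paper only asserts as ``readily verified''.

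For $m^*$ your route is slightly different and more elementary. The integrand $\sigma^0\pi^*$ is a deterministic constant, so $\int_0^t\sigma^0\pi^*\,dW^0_s=\sigma^0\pi^*W^0_t$ is Gaussian. You therefore evaluate $\mathbb{E}[(m_t^*)^p]$ exactly through the Gaussian moment generating function. The paper instead bounds the drift by a constant $L_1$ and handles $\mathbb{E}[\exp\{p\int_0^t\beta\,dW^0_s\}]$ via Novikov's condition and a Girsanov change of measure $d\mathbb{Q}/d\mathbb{P}=Z_t$.

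Both computations produce the same factor $\exp\{\tfrac12p^2(\sigma^0\pi^*)^2t\}$. Your version needs no martingale machinery and gives the moment in closed form. The paper's change-of-measure device also works for bounded random integrands, and that generality is what the paper relies on when it reuses the argument in Lemma \ref{EcX}: there it handles the jump-diffusion wealth processes and arbitrary admissible controls $(\pi^i,c^i)$, where $\pi^i$ is random and a direct Gaussian computation is no longer available.
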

	\begin{proof}
		We first discuss the boundedness of \(\pi^*\) and \(c_t^*\) for \(t\in[0,T]\).
		It follows from the proof of Theorem \ref{th3} that
		\(\pi^*\in(0,1-\epsilon_0]\), and that \(c^*\) is continuous and strictly
		positive on \([0,T]\). Hence, \(c^*\) is bounded away from zero and infinity
		on \([0,T]\). Therefore, for every \(p\in\mathbb R\),
		\[
		\mathbb{E}\left[\left(\Gamma_t^*\right)^p\right]=\left(c_t^*\right)^p\leqslant C_p^{*,2}.
		\]
		We next establish the boundedness of $\mathbb{E}\left[\left(m^*_t\right)^p\right]$. By \eqref{m^*},
		\begin{equation}\label{Em}
			\mathbb{E}\left[\left(m^*_t\right)^p\right]=\mathbb{E}\left[x_0^p \exp\left\{ p\int_0^t \left(\eta\left(\pi^* \right)-c_s^*-\frac{1}{2}\left(\sigma^0 \pi^*\right)^2\right)ds+p\int_0^t \sigma^0\pi^* dW_s^0 \right\}\right].
		\end{equation}
		Let
		\[
		\alpha_s=\eta\left(\pi^* \right)-c_s^*-\frac{1}{2}\left(\sigma^0 \pi^*\right)^2,
		\qquad
		\beta=\sigma^0\pi^*.
		\]
		By Assumption \ref{Ao}, there exist constants \(L_1\) and \(L_2\) such that,
		for all \(s\in[0,T]\),
		\begin{equation}\label{atbt}
			|\alpha_s|\leqslant L_1,\qquad|\beta|\leqslant L_2.
		\end{equation}
		Under the initial probability measure $\mathbb{P}$, consider the process defined by
		\begin{equation}
			Z_t=\exp\left\{p\int_{0}^{t}\beta dW_s^0-\frac{p^2}{2}\int_{0}^{t}\beta^2ds\right\}.
		\end{equation}
		From the Novikov condition, namely that $$\mathbb{E}\left[\exp\left\{\frac{1}{2}\int_{0}^{T}p^2\beta^2ds\right\}\right]\leqslant \exp\left\{\frac{1}{2}p^2L_2^2T\right\}<\infty,$$
		it follows that $Z_t$ is a martingale with $\mathbb{E}[Z_t] = 1$. Define a new probability measure \(\mathbb Q\sim\mathbb P\) by
		\begin{equation*}
			\frac{d\mathbb Q}{d\mathbb P}\bigg|_{\mathcal G_t}=Z_t .
		\end{equation*}
		Applying the change of measure, we have
		\begin{equation}\label{P-Q}
			\begin{aligned}
				\mathbb{E}\left[\exp\left\{p\int_{0}^{t}\beta dW_s^0\right\}\right]=\mathbb{E}^{\mathbb{Q}}\left[\exp\left\{\frac{p^2}{2}\int_{0}^{t}\beta^2ds\right\}\right]\leqslant \exp\left\{\frac{1}{2}p^2L_2^2T\right\}.
			\end{aligned}
		\end{equation}
		Substituting \eqref{atbt} and \eqref{P-Q} into \eqref{Em}, we obtain
		\begin{equation*}
			\mathbb{E}\left[\left(m^*_t\right)^p\right]\leqslant x_0^p \exp\left\{|p|L_1T+\frac{1}{2}p^2L_2^2T\right\}:=C_p^{*,1}.
		\end{equation*}
		This completes the proof of the lemma.
	\end{proof}
	\begin{lem}\label{EcX}
		Let Assumption \ref{Ao} hold. Then, for any $q\in\mathbb{R}$, there exist deterministic constants $C_q^{*,1}$ and $C^{*,2}$, independent of $N$ and $i$, such that, for every $i=1,\ldots,N$ and every admissible pair $(\pi^i,c^i)\in\mathcal A_i$,
		\begin{equation}
			\sup_{t\in[0,T]}
			\mathbb{E}\left[\left|c_t^{*,i}X_t^{*,i}\right|^q\right]
			\leqslant C_q^{*,1},
			\qquad
			\mathbb{E}\left[\int_0^T c_t^iX_t^i\,dt\right]
			\leqslant C^{*,2}.
		\end{equation}
	\end{lem}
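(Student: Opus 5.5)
The plan splits into the two bounds of Lemma \ref{EcX}. For the first, write $X^{*,i}$ in closed form, bound its moments uniformly in $i$, and use that $c^{*,i}$ is deterministic and bounded. For the second, use a supermartingale argument that works for an arbitrary admissible pair. Throughout, constants are made uniform in $i$ and $N$ by exploiting the compactness of $\mathcal O$ in Assumption \ref{Ao} together with the continuity of $\phi_i$ and $\varphi_i$ in $(\xi,\xi^i)$.

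\textbf{Step 1 (uniform bounds on the strategies).} By Lemma \ref{lmhat} and the discussion after it, $\pi^{*,i}=\phi_i(\xi,\xi^i)\in[D_0,1-\epsilon_0]$. Hence $|\pi^{*,i}|\le K:=\max\{|D_0|,1\}$ and $1-\pi^{*,i}\ge\epsilon_0$. The consumption $c^{*,i}_t$ is given explicitly in terms of $A_i$ and $B_i$, which are continuous functions of $(\xi,\xi^i)$ on the compact set $\mathcal O$. Because $c^{*,i}_t$ is continuous and positive in $(t,\xi,\xi^i)$ on the compact set $[0,T]\times\mathcal O$, we obtain $0<\underline c\le c^{*,i}_t\le\overline c$ uniformly in $i$, $N$ and $t$. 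Positivity has to be checked in the case $A_i\neq0$, where one needs $B_i(1-e^{A_i(t-T)})+A_i$ to have the same sign as $A_i$. Positivity is also what one expects from the optimality in Lemma \ref{lmhat}.

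\textbf{Step 2 (moments of $c^{*,i}X^{*,i}$).} Solving \eqref{pic*} by the Doléans-Dade exponential gives
\[
X_t^{*,i}=x_0^i\exp\Bigl\{\int_0^t\bigl(\pi^{*,i}(b_i+\lambda_i)-c^{*,i}_s-\tfrac12(\pi^{*,i})^2(\sigma_i^2+(\sigma_i^0)^2)\bigr)ds+\pi^{*,i}\sigma_iW^i_t+\pi^{*,i}\sigma^0_iW^0_t\Bigr\}(1-\pi^{*,i})^{N^i_t}.
\]
Raise this to the power $q$. The drift term is bounded by a uniform constant. The Gaussian part is handled exactly as in Lemma \ref{mGamma-cX}, giving $\mathbb E[e^{q\beta W_t}]=e^{q^2\beta^2t/2}$. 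For the jump factor, independence and the Poisson moment generating function give $\mathbb E[(1-\pi^{*,i})^{qN^i_t}]=\exp\{\lambda_i t((1-\pi^{*,i})^q-1)\}$. Since $1-\pi^{*,i}\in[\epsilon_0,1+|D_0|]$, this is bounded for every $q\in\mathbb R$. Multiplying by $(c^{*,i}_t)^q\le\max\{\underline c^q,\overline c^q\}$ yields $C_q^{*,1}$.

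\textbf{Step 3 (the arbitrary admissible pair).} For general $(\pi^i,c^i)$, integrate the wealth equation \eqref{SDEi}. Using $X^i>0$ (Remark \ref{rmk:admissible}),
\[
X^i_t+\int_0^tc^i_sX^i_s\,ds=x_0^i+\int_0^t\pi^i_sb_iX^i_s\,ds+\text{local martingale}.
\]
The drift is not sign-definite because $D_0$ may be negative and $b_i>0$. The way around this is to discount: the process $e^{-b_iKt}X^i_t$ plus the corresponding discounted consumption integral is bounded above by a nonnegative local martingale plus $x_0^i$. More precisely, $Y_t:=e^{-b_iKt}X^i_t+\int_0^te^{-b_iKs}c^i_sX^i_s\,ds$ has nonpositive drift and is nonnegative, so it is a supermartingale by Fatou's lemma. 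Therefore
\[
\mathbb E\Bigl[\int_0^Tc^i_sX^i_s\,ds\Bigr]\le e^{b_iKT}x_0^i\le C^{*,2},
\]
with the constant uniform by the compactness of $\mathcal K$.

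The main obstacle is Step 3. Here one must justify the localization carefully: stop at times where the stochastic integrals are true martingales, then pass to the limit using monotone convergence for the consumption integral and nonnegativity of $X^i$. One must also check that $\pi^i\le1-\epsilon_0$ keeps the jump term from destroying positivity. A secondary point is the uniform positivity of $c^{*,i}$ in Step 1.
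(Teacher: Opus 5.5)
Your proposal is correct. For the first bound you follow essentially the paper's route: two-sided bounds on $c^{*,i}$ from the explicit formula and compactness, then moments of the explicit exponential for $X^{*,i}$. The only difference is that the paper computes $\mathbb E[(X^{*,i}_t)^q]$ through the change of measure $\widetilde{\mathbb Q}$, whereas you use the independence of $W^i,W^0,N^i$ and the Gaussian and Poisson moment generating functions, which is the same computation done more directly. Two small remarks on Step 1:
\begin{itemize}
\item The positivity you flag is immediate. Since $B_i>0$, for $t\le T$ both $A_iB_ie^{A_i(t-T)}$ and $B_i(1-e^{A_i(t-T)})+A_i$ have the sign of $A_i$, and the denominator never vanishes.
\item You do not need continuity of $\phi_i$. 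The constraint $\hat\pi^i\in[D_0,1-\epsilon_0]$ already bounds $|A_i|$ uniformly.
\end{itemize}

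For the second bound your route genuinely differs from the paper's.
\begin{itemize}
\item The paper writes $X^i=R^ie^{-A^i_t}\le R^i$ and bounds the moments of $R^i$ uniformly over admissible $\pi^i$. It then takes expectations in the integrated wealth equation to get $x_0^i+K\sup_t\mathbb E[X^i_t]$. That last step needs the stochastic integrals to be true martingales, which follows from the moment bounds but is left implicit.
\item You discount at rate $b_iK$, so that $Y$ is nonnegative with nonpositive drift and hence a supermartingale by Fatou. This gives the explicit constant $e^{b_iKT}x_0^i$ with no moment estimates and no martingale verification. Your argument is more elementary and avoids the tacit step.
\end{itemize}

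What the paper's route buys in addition is the estimate $\sup_t\mathbb E[(X^i_t)^{\hat q}]\le\tilde L_{\hat q}$ for every $\hat q>0$, uniformly over admissible controls. The paper reuses this for the terminal terms $I_3^i$ and $I_4^i$. Your supermartingale also gives $\mathbb E[X^i_T]\le e^{-b_iKT}\cdot e^{2b_iKT}\,\mathbb E[e^{-b_iKT}X^i_T]\le e^{b_iKT}x_0^i$, hence by Jensen bounds for all exponents $\hat q\le 1$. That covers what those terms actually use, since the H\"older exponents there are chosen below $1$, but it does not give $\hat q>1$.
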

	\begin{proof}
		We first establish the first inequality in the lemma. By Lemma \ref{lmhat}, \(c_t^{*,i}=\hat c_t^i\) is given explicitly in terms of
		\(A_i\) and \(B_i\). By Assumption \ref{Ao} and the explicit forms of
		\(A_i\) and \(B_i\), there exist constants  \(0<\bar A<\infty\)  and
		\(0<\underline B\leqslant \bar B<\infty\), independent of \(N\) and \(i\), such that
		\[
		|A_i|\leqslant \bar A,\qquad \underline B\leqslant B_i\leqslant \bar B .
		\]
		Hence, from the explicit formula of \(\hat c_t^i\), the family
		\(\{c_t^{*,i}\}_{i,N}\) is uniformly bounded away from zero and infinity on
		\([0,T]\). Consequently, for every \(q\in\mathbb R\), there exists a constant
		\(\tilde L_q^{*,1}\), independent of \(N\) and \(i\), such that
		\[
		\sup_{t\in[0,T]}\left|c_t^{*,i}\right|^q\leqslant \tilde L_q^{*,1},\qquad i=1,\ldots,N .
		\]
		Therefore, it remains to show that, for every \(q\in\mathbb R\), there exists a constant \(\tilde L_q^{*,2}\), independent of \(N\) and \(i\), such that
		\[
		\sup_{t\in[0,T]}\mathbb E\left[\left|X_t^{*,i}\right|^q\right]\leqslant \tilde L_q^{*,2},\qquad i=1,\ldots,N .
		\]
		Recalling that the wealth process $X^{*,i}_t$ satisfies \eqref{pic*}, 
		an application of It\^o's formula yields
		\begin{equation*}
			\begin{aligned}
				(X_t^{*,i})^q=&(x_0^i)^q\exp\left\{q\int_0^t\left[\pi^{*,i}(b_i+\lambda_i)-c_s^{*,i}-\frac12(\pi^{*,i})^2(\sigma_i^2+(\sigma_i^0)^2)\right]ds\right.\\
				&\left.+q\int_0^t\pi^{*,i}(\sigma_idW_s^i+\sigma_i^0dW_s^0)+q\int_0^t\ln(1-\pi^{*,i})\,dN_s^i\right\}.
			\end{aligned}
		\end{equation*}
		Similar to the proof of Lemma \ref{mGamma-cX}, we define the following process
		\begin{equation*}
			\begin{aligned}
				\widetilde Z_t=\exp&\left\{q\int_0^t\pi^{*,i}(\sigma_i dW_s^i+\sigma_i^0 dW_s^0)-\frac12q^2\int_0^t(\pi^{*,i})^2(\sigma_i^2+(\sigma_i^0)^2)ds\right.\\
				&\left.+q\int_{0}^{t}\ln(1-\pi^{*,i})dN_s^i+\int_{0}^{t}[1-(1-\pi^{*,i})^q]\lambda_ids\right\}.
			\end{aligned}
		\end{equation*}
		By the Novikov condition for L\'evy processes, we can verify that $(\widetilde Z_t)_{t\in[0,T]}$ is a martingale and that $\mathbb{E}[\widetilde Z_t] = 1$. Define a new probability measure $\widetilde{\mathbb Q}\sim\mathbb{P}$ by
		\[
		\frac{d\widetilde{\mathbb Q}}{d\mathbb{P}}\mid_{\mathcal{G}_t}= \widetilde Z_t,\quad t\in[0,T].
		\]
		Observe that
		\begin{equation*}
			\begin{aligned}
				\mathbb{E}[(X_t^{*,i})^q]
				&=(x_0^i)^q \mathbb{E}^{\widetilde{\mathbb Q}} \left[\exp \left\{ \int_0^t  \left[q\pi^{*,i}(b_i+\lambda_i)-qc_s^{*,i}\right.\right.\right.\\
				&\qquad\qquad\left.\left.\left. +\frac{1}{2}q(q-1)(\pi^{*,i})^2(\sigma_i^2+(\sigma_i^0)^2)+\lambda_i((1-\pi^{*,i})^q-1) \right]ds \right\} \right].\\
			\end{aligned}
		\end{equation*}
		Since \(\xi^i\), \(\pi^{*,i}\), and \(c_s^{*,i}\) are
		uniformly bounded, it follows that, for every \(q\in\mathbb R\), 
		\begin{equation}\label{supEX}
			\sup_{t\in[0,T]}\mathbb E\left[\left|X_t^{*,i}\right|^q\right]\leqslant \tilde L_q^{*,2},\qquad i=1,\ldots,N .
		\end{equation}
		Hence, we obtain
		\begin{equation*}
			\sup_{t\in[0,T]}\mathbb{E}\left[\left|c^{*,i}_tX^{*,i}_t\right|^q\right]\leqslant\sup_{t\in[0,T]}\left|c^{*,i}_t\right|^q\sup_{t\in[0,T]}\mathbb{E}\left[\left|X^{*,i}_t\right|^q\right]\leqslant C_q^{*,1}.
		\end{equation*}
		
		Next, we prove the second inequality. Let
		$(\pi^i,c^i)\in\mathcal A_i$ be arbitrary. Set
		\[
		A_t^i:=\int_0^t c_s^i\,ds,\qquad t\in[0,T].
		\]
		By Definition \ref{mathcalAi}, $A_T^i<\infty$, $\mathbb P$-a.s. The
		wealth process can be written as
		\[
		X_t^i=R_t^i e^{-A_t^i},
		\]
		where
		\[
		\begin{aligned}
			R_t^i
			=&x_0^i
			\exp\Bigg\{
			\int_0^t
			\left[
			\pi_s^i(b_i+\lambda_i)
			-\frac12(\pi_s^i)^2\left(\sigma_i^2+(\sigma_i^0)^2\right)
			\right]ds \\
			&\quad
			+\int_0^t\pi_s^i\left(\sigma_i dW_s^i+\sigma_i^0dW_s^0\right)
			+\int_0^t\ln(1-\pi_s^i)\,dN_s^i
			\Bigg\}.
		\end{aligned}
		\]
		Using the same argument as in the proof of the boundedness estimate \eqref{supEX}, we obtain, for any $\hat q>0$ and any $i=1,\ldots,N$,
		\begin{equation}\label{supER}
			\sup_{t\in[0,T]}\mathbb{E}\left[\left(X_t^{i}\right)^{\hat{q}}\right]
			\leqslant
			\sup_{t\in[0,T]}
			\mathbb{E}\left[\left(R_t^i\right)^{\hat{q}}\right]
			\leqslant \tilde{L}_{\hat{q}},
		\end{equation}
		where $\tilde L_{\hat q}$ is a deterministic constant independent of $N$, $i$, and the admissible pair $(\pi^i,c^i)$. Integrating the wealth equation \eqref{SDEi} and then taking expectations, we obtain
		\[
		\mathbb E\left[\int_0^T c_t^iX_t^i\,dt\right]
		=
		x_0^i
		+\mathbb E\left[\int_0^T \pi_t^ib_iX_t^i\,dt\right]
		-\mathbb E[X_T^i].
		\]
		Since $X_T^i\ge0$, $\pi^i$ is bounded, and $b_i$ is uniformly bounded under
		Assumption \ref{Ao}, it follows from \eqref{supER} with $\hat q=1$ that
		\[
		\begin{aligned}
			\mathbb E\left[\int_0^T c_t^iX_t^i\,dt\right]
			&\le
			x_0^i
			+K\sup_{t\in[0,T]}\mathbb E[X_t^i]  \\
			&\le
			C^{*,2}.
		\end{aligned}
		\]
		Here and throughout the rest of the paper, \(K\) denotes a deterministic constant independent of \(N\), \(i\), and the admissible pair \((\pi^i,c^i)\in\mathcal A_i\), whose value may change from line to line. This completes the proof of the lemma.
	\end{proof}
	\begin{rmk}
		By H\"older's inequality, Lemma \ref{EcX} yields the following consequence: for any $a\in(0,1)$ and any admissible control $(\pi^i,c^i)\in\mathcal A_i$, there exists a deterministic constant $C_a^*$, independent of $N$, $i$, and $(\pi^i,c^i)$, such that, for every $i=1,\ldots,N$,
		\[
		\begin{aligned}
			\mathbb E\left[\int_0^T (c_t^iX_t^i)^a\,dt\right]
			&\leqslant
			\left(
			\mathbb E\left[\int_0^T c_t^iX_t^i\,dt\right]
			\right)^a
			\left(
			\mathbb E\left[\int_0^T 1\,dt\right]
			\right)^{1-a}\leqslant C_a^* .
		\end{aligned}
		\]
	\end{rmk}
	\begin{lem}\label{EmEGamma}
		Let Assumption \ref{Ao} hold. Then, for any \(n>0\), the following assertions hold: for each \(t\in[0,T]\),
		\[
		\lim_{N\to\infty}
		\mathbb E\left[
		\left|
		\overline{X_t^*}-m_t^*
		\right|^n
		\right]=0,
		\]
		and
		\[
		\lim_{N\to\infty}
		\mathbb E\left[
		\int_0^T
		\left|
		m_t^*\Gamma_t^*
		-\overline{c_t^*}\,\overline{X_t^*}
		\right|^n\,dt
		\right]=0.
		\]
		Here, $\overline{X_t^{*}}=\left(\prod _{j=1}^N X^{*,j}_t\right)^{\frac{1}{N}},\overline{c_t^{*}}=\left(\prod _{j=1}^N c^{*,j}_t\right)^{\frac{1}{N}}$, and \(m_t^*,\Gamma_t^*\) are the limiting processes given in Theorem \ref{th3}.
	\end{lem}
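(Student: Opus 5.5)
The plan is to work at the level of logarithms, where everything is explicit and linear, and only exponentiate at the end using the moment bounds already established. By It\^o's formula applied to \eqref{pic*}, for each $j$,
\[
\ln X_t^{*,j}=\ln x_0^j+\int_0^t\Bigl(\pi^{*,j}(b_j+\lambda_j)-c_s^{*,j}-\tfrac12(\pi^{*,j})^2\bigl(\sigma_j^2+(\sigma_j^0)^2\bigr)\Bigr)ds+\pi^{*,j}\sigma_jW_t^j+\pi^{*,j}\sigma_j^0W_t^0+\ln(1-\pi^{*,j})N_t^j .
\]
I will write $N_t^j=M_t^j+\lambda_jt$. Then $\ln\overline{X_t^*}=\frac1N\sum_j\ln X_t^{*,j}$ splits into three parts.
\begin{itemize}
\item[(a)] A deterministic part, namely $\frac1N\sum_j f(t,\xi^j)$ with $f(t,\xi^j)=\ln x_0^j+\int_0^t\bigl(\pi^{*,j}(b_j+\lambda_j)+\lambda_j\ln(1-\pi^{*,j})-c_s^{*,j}-\tfrac12(\pi^{*,j})^2(\sigma_j^2+(\sigma_j^0)^2)\bigr)ds$.
\item[(b)] A common-noise part, $\bigl(\frac1N\sum_j\pi^{*,j}\sigma_j^0\bigr)W_t^0$.
\item[(c)] An idiosyncratic part, $\frac1N\sum_j\bigl(\pi^{*,j}\sigma_jW_t^j+\ln(1-\pi^{*,j})M_t^j\bigr)$.
\end{itemize}
On the limit side, \eqref{m^*} gives $\ln m_t^*=\ln x_0+\int_0^t(\eta(\pi^*)-c_s^*-\frac12(\sigma^0\pi^*)^2)ds+\sigma^0\pi^*W_t^0$.

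The first key step is to identify the limit of the strategies, i.e.\ to check that $\phi_i(\xi,\xi)=\pi^*$ and $\varphi_i(t,\xi,\xi)=c_t^*$. For the investment, putting $\xi^i=\xi$ in the equation of Lemma \ref{lmhat} gives exactly \eqref{equpi}, and uniqueness then yields $\hat\pi=\pi^*$. For the consumption, the same substitution should give $A_i=\rho/(\gamma(1-\theta)-1)$ and $B_i=\varepsilon^{1/(\gamma(1-\theta)-1)}$. Comparing with the formula of Theorem \ref{th3}, this gives $\hat c=c^*$; this is a routine but necessary algebraic check.

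Next I need continuity of $(\xi^i)\mapsto(\phi_i,\varphi_i(t,\cdot))$ on the compact set $\mathcal O$, uniformly in $t\in[0,T]$. For $\phi_i$ this follows from the implicit function theorem, since the derivative of the equation in $\hat\pi$ is strictly negative. For $\varphi_i$ it follows from the explicit formula together with the bounds on $A_i,B_i$ used in Lemma \ref{EcX}. Assumption \ref{Ao} (weak convergence to $\delta_\xi$) applied to the bounded continuous functions $f(t,\cdot)$ and $\pi\sigma^0$ then gives $\frac1N\sum_jf(t,\xi^j)\to f(t,\xi)$. The limit $f(t,\xi)$ matches the drift of $\ln m_t^*$, because $\pi^*(b+\lambda)+\lambda\ln(1-\pi^*)-\frac12(\pi^*)^2\sigma^2=\eta(\pi^*)$. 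Likewise $\frac1N\sum_j\pi^{*,j}\sigma^0_j\to\pi^*\sigma^0$. The same argument applied to $\ln c_t^{*,j}$ gives $\overline{c_t^*}\to c_t^*=\Gamma_t^*$ uniformly in $t$, since uniform continuity on $[0,T]\times\mathcal O$ upgrades the pointwise convergence.

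For the random part, (b) converges in every $L^r$ because $W^0_t$ has all moments. Part (c) is a normalized sum of independent, mean-zero terms whose coefficients are uniformly bounded (using $\pi^{*,j}\le 1-\epsilon_0$ and the compactness of $\mathcal K$). By Rosenthal's (or BDG) inequality its $L^{r}$ norm is $O(N^{-1/2})$ for every $r\ge2$. Hence $\mathbb E|\ln\overline{X_t^*}-\ln m_t^*|^{r}\to0$ for all $r$, uniformly in $t$.

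To pass to the exponential I will use $|e^a-e^b|\le|a-b|(e^a+e^b)$ together with H\"older's inequality. This requires $\sup_N\sup_t\mathbb E[\overline{X_t^*}^{\,p}]<\infty$. That bound follows from the generalized H\"older inequality,
\[
\mathbb E\Bigl[\prod_j (X_t^{*,j})^{p/N}\Bigr]\le\prod_j\bigl(\mathbb E[(X_t^{*,j})^p]\bigr)^{1/N},
\]
combined with \eqref{supEX}, and from Lemma \ref{mGamma-cX} for $m^*$. This proves the first claim, in fact uniformly in $t$.

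For the second claim I will write
\[
m_t^*\Gamma_t^*-\overline{c_t^*}\,\overline{X_t^*}=\Gamma_t^*\bigl(m_t^*-\overline{X_t^*}\bigr)+\overline{X_t^*}\bigl(\Gamma_t^*-\overline{c_t^*}\bigr).
\]
Here $\Gamma^*$ and $\overline{c^*}$ are bounded deterministic functions, and $\overline{c^*}\to\Gamma^*$ uniformly in $t$. Together with the uniform moment bounds and the uniform-in-$t$ convergence from the first part, integrating over $[0,T]$ gives the claim (or dominated convergence can be used instead). The main obstacle is the identification and uniform continuity step for $\varphi_i$ and $\phi_i$. If the implicit-function argument for $\phi_i$ proves delicate, my fallback is the Lipschitz property from Bo et al.\ cited in the proof of Theorem \ref{th3}, applied in the variable $\xi^i$.
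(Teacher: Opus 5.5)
Your proposal is correct and follows essentially the paper's proof. It uses the same log decomposition: your idiosyncratic part (c) is exactly the paper's $\overline{Y_t^*}-\overline{\mu_t^*}$, and your parts (a)+(b) are handled, as in the paper, by weak convergence of $\nu_0^N$ applied to the continuous type maps $\Phi$ and $\Psi_s$. It then uses the moment bounds of Lemmas~\ref{mGamma-cX} and \ref{EcX} (via the $N$-fold H\"older inequality) to return to the exponential scale, and the product splitting for the second claim.

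The differences are minor. You pass back to $\overline{X_t^*}$ via $|e^a-e^b|\le|a-b|(e^a+e^b)$, H\"older, and Rosenthal's inequality, which gives uniformity in $t$, whereas the paper uses $L^2$ bounds with uniform integrability and Vitali. You also verify explicitly that $\hat\rho(\xi)=\rho$ (so that $A_i$, $B_i$ reduce to the constants in Theorem~\ref{th3}) and that $\phi_i(\xi,\xi)=\pi^*$; the paper simply asserts both by uniqueness.
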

	\begin{proof}
		For $i=1,\cdots,N$, set $Y_t^{*,i}:=\ln X_t^{*,i}$. Then
		\begin{equation*}
			\overline{Y_t^*}:=\frac1N\sum_{i=1}^N Y_t^{*,i}=\frac1N\sum_{i=1}^N\ln X_t^{*,i}=\ln \overline{X_t^*},
		\end{equation*}
		We first prove that, as $N\to\infty$,
		\[
		\mathbb{E}\left[\left|\overline{X_t^*}-m_t^*\right|^n\right]=\mathbb{E}\left[\left|e^{\overline{Y_t^*}}-e^{\ln m^*_t}\right|^n\right]\to0.
		\]
		By Lemmas \ref{mGamma-cX} and \ref{EcX}, for any $k>0$,
		\[
		\mathbb{E}\left[\left(\overline{X_t^*}\right)^k\right]\leqslant\prod_{i=1}^N\left\{\mathbb{E}\left[(X_t^{*,i})^k\right]\right\}^{\frac{1}{N}}\leqslant K,\qquad\mathbb{E}\left[(m_t^*)^k\right]\leqslant K.
		\]
		Hence, for any $\kappa>0$, 
		\[
		\sup_{N\geqslant1}\mathbb{E}\left[\left|\overline{X_t^*}-m_t^*\right|^{n(1+\kappa)}\right]\leqslant C_{n,\kappa}\sup_{N\geqslant 1}\mathbb{E}\left[|\overline{X_t^*}|^{n(1+\kappa)}+|m_t^*|^{n(1+\kappa)}\right]\leqslant K.
		\]
		where \(C_{n,\kappa}\) is a constant depending only on $n,\kappa$. Therefore, $\Big\{\big|\overline{X_t^*}-m_t^*\big|^n\Big\}_{N\geqslant1}$ is uniformly integrable. By Vitali's convergence theorem, it is enough to prove that
		\[
		e^{\overline{Y_t^*}}\to e^{\ln m^*_t}\qquad\text{in probability}.
		\]
		Since the exponential function is continuous, by the continuous mapping theorem it suffices to show that
		\[
		\overline{Y_t^*}\to \ln m_t^*\qquad\text{in probability}.
		\]
		Applying It\^o's formula to $Y_t^{*,i}=\ln X_t^{*,i}$, we obtain
		\begin{equation*}
			\begin{aligned}
				Y_t^{*,i}=&\ln x_0^i+\int_0^t\left[\pi^{*,i}(b_i+\lambda_i)-c_s^{*,i}-\frac12(\pi^{*,i})^2\left(\sigma_i^2+(\sigma_i^0)^2\right)+\lambda_i\ln(1-\pi^{*,i})\right]ds\\
				&+\int_0^t \pi^{*,i}\sigma_idW_s^i+\int_0^t \pi^{*,i}\sigma_i^0dW_s^0+\int_0^t \ln(1-\pi^{*,i})dM_s^i.
			\end{aligned}
		\end{equation*}
		On the other hand, by \eqref{m^*} in Theorem \ref{th3},
		\begin{equation*}
			\begin{aligned}
				\ln m_t^*=&\ln x_0+\int_0^t\left[\pi^*(b+\lambda)-c_s^*-\frac12(\pi^*)^2\left(\sigma^2+(\sigma^0)^2\right)+\lambda\ln(1-\pi^*)\right]ds\\
				&+\int_0^t \pi^*\sigma^0dW_s^0.
			\end{aligned}
		\end{equation*}
		Define
		\begin{equation*}
			\begin{aligned}
				\overline{\mu_t^*}:=\frac1N\sum_{i=1}^N\mathbb{E}[Y_t^{*,i}\mid\mathcal F_t^0].
			\end{aligned}
		\end{equation*}
		Then
		\begin{equation}\label{Ylnm}
			\overline{Y_t^*}-\ln m_t^*=\left(\overline{Y_t^*}-\overline{\mu_t^*}\right)+\left(\overline{\mu_t^*}-\ln m_t^*\right).
		\end{equation}
		We first estimate the first term. From the independence of $(W^1,\dots,W^N)$ and $(M^1,\dots,M^N)$, we have
		\begin{equation*}
			\begin{aligned}
				\mathbb{E}\left[\left|\overline{Y_t^*}-\overline{\mu_t^*}\right|^2\right]&=\frac1{N^2}\mathbb{E}\left[\left|\sum_{i=1}^N \left(Y_t^{*,i}-\mathbb{E}[Y_t^{*,i}\mid\mathcal F_t^0]\right)\right|^2\right]\\
				&=\frac1{N^2}\sum_{i=1}^N \mathbb{E}\left[\left(\int_0^t\pi^{*,i}\sigma_idW_s^i\right)^2+\left(\int_0^t\ln(1-\pi^{*,i})dM_s^i\right)^2\right.\\
				&\qquad\qquad\,\,\left.+2\left(\int_0^t\pi^{*,i}\sigma_idW_s^i\right)\left(\int_0^t \ln(1-\pi^{*,i})dM_s^i\right)\right].
			\end{aligned}
		\end{equation*}
		Using It\^o's isometry and the isometry for compensated Poisson integrals, we get
		\begin{equation*}
			\begin{aligned}
				\mathbb{E}\left[\left|\overline{Y_t^*}-\overline{\mu_t^*}\right|^2\right]\leqslant\frac{2}{N^2}\sum_{i=1}^N\mathbb{E}\left\{\int_0^t\left[(\pi^{*,i})^2\sigma_i^2+\lambda_i(\ln(1-\pi^{*,i}))^2
				\right]ds\right\}\leqslant\frac{K}{N}.
			\end{aligned}
		\end{equation*}
		Consequently,
		\begin{equation}\label{Ymu}
			\mathbb{E}\left[\left|\overline{Y_t^*}-\overline{\mu_t^*}\right|^2\right]\to0\qquad\text{as }N\to\infty.	
		\end{equation}
		
		We next analyze the second term in \eqref{Ylnm} . Write
		\[
		\overline{\mu_t^*}-\ln m_t^*
		=
		A^N+\int_0^t B_s^Nds+\int_0^t C_s^NdW_s^0,
		\]
		where
		\begin{equation*}
			\left\{
			\begin{aligned}
				&A^N:=\frac1N\sum_{i=1}^N \ln x_0^i-\ln x_0,\\
				&B_s^N:=\frac1N\sum_{i=1}^N\left[\pi^{*,i}(b_i+\lambda_i)-c_s^{*,i}-\frac12(\pi^{*,i})^2\left(\sigma_i^2+(\sigma_i^0)^2\right)+\lambda_i\ln(1-\pi^{*,i})\right]\\
				&\qquad-\left[\pi^*(b+\lambda)-c_s^*-\frac12(\pi^*)^2\left(\sigma^2+(\sigma^0)^2\right)+\lambda\ln(1-\pi^*)\right],\\
				&C_s^N:=\frac1N\sum_{i=1}^N\pi^{*,i}\sigma_i^0-\pi^*\sigma^0.
			\end{aligned}
			\right.
		\end{equation*}
		By Assumption \ref{Ao}, for any bounded continuous function $f$, we have
		\begin{equation}\label{ftof}
			\begin{aligned}
				\frac{1}{N}\sum_{i=1}^{N}f(\xi^i)=\int_{\mathcal{O}} f(\zeta)d\nu_0^N\to\int_{\mathcal{O}} f(\zeta)d\nu_0=f(\xi).
			\end{aligned}
		\end{equation}
		For $\zeta=(x_\zeta,\lambda_\zeta,b_\zeta,\sigma_\zeta,\sigma^0_\zeta,\varepsilon_\zeta,\gamma_\zeta,\theta_\zeta) \in \mathcal{O}$, $\ln x_{\zeta}$ is a bounded continuous function. 
		Therefore, by \eqref{ftof}, we have
		\[
		\frac1N\sum_{i=1}^N \ln x_0^i\to \ln x_0
		\qquad \text{as }N\to\infty,
		\]
		that is,
		\[
		A^N\to0
		\qquad \text{as }N\to\infty.
		\]
		We now consider $C_s^N$, fix $s\in[0,T]$, let $\Phi(\zeta)$ denote the solution on $[D_0,1-\epsilon_0]$ of the equation defining as following:
		\begin{equation*}
			G(x,\zeta)=(\gamma_\zeta-1)\left(\sigma_\zeta^2+\left(\sigma_\zeta^0\right)^2\right)x-\theta_\zeta\gamma_\zeta\sigma_\zeta^0 \sigma^0 \pi^*
			-\lambda_\zeta\left(\left(1-x\right)^{\gamma_\zeta-1}-1\right)+b_\zeta=0.
		\end{equation*}
		Then, by uniqueness of the solution,
		\[
		\pi^{*,i}=\Phi(\xi^i),
		\qquad
		\pi^*=\Phi(\xi).
		\]
		We claim that $\Phi(\zeta)$ is continuous in $\zeta$. Indeed, $G(x,\zeta)$ is continuous in $(x,\zeta)$. Let $\{\zeta_n\}_{n\ge1}\subset\mathcal O$ satisfy $\zeta_n\to\zeta$, and define $x_n:=\Phi(\zeta_n)$. Since $\{x_n\}_{n\ge1}\subset [D_0,1-\epsilon_0]$ is bounded, there exists a subsequence \(\{x_{n_k}\}_{k\ge1}\) such that
		\[
		x_{n_k}\to \bar x\in [D_0,1-\epsilon_0].
		\]
		Using $G(x_{n_k},\zeta_{n_k})=0$ and the continuity of $G$, we obtain $G(\bar x,\zeta)=0$. By uniqueness, $\bar x=\Phi(\zeta)$. Hence every convergent subsequence of $\{x_n\}_{n\ge1}$ has the same limit $\Phi(\zeta)$, and therefore
		\[
		\Phi(\zeta_n)\to \Phi(\zeta).
		\]
		Thus, $\Phi$ is continuous on $\mathcal O$. Clearly, $\Phi(\zeta)\sigma_\zeta^0$ is a bounded continuous function. Therefore, by \eqref{ftof}, as $N \to \infty$,
		\[
		\frac1N\sum_{i=1}^N \pi^{*,i}\sigma_i^0=\frac1N\sum_{i=1}^N\Phi(\xi^i)\sigma_i^0\to \Phi(\xi)\sigma^0=\pi^*\sigma^0.
		\]
		That is,
		\[
		C_s^N\to0\qquad \text{as }N\to\infty.
		\]
		Next we prove that
		\[
		B_s^N\to0\qquad\text{as }N\to\infty.
		\]
		For fixed $s\in[0,T]$, denote $\Psi_s(\zeta)$ as follows: 
		\begin{equation*}
			\Psi_s(\zeta)
			=\left\{
			\begin{aligned}
				&\frac{A(\zeta)B(\zeta)e^{A(\zeta)(s-T)}}{B(\zeta)\big(1-e^{A(\zeta)(s-T)}\big)+A(\zeta)}\qquad A(\zeta)\neq0,\\
				&\frac{B(\zeta)}{1+B(\zeta)(T-s)}\qquad\qquad\qquad\quad\, A(\zeta)=0,
			\end{aligned}
			\right.
		\end{equation*}
		where
		\begin{equation*}
			A(\zeta)=\frac{1}{\gamma_\zeta-1}[\hat{\rho}(\zeta)+\theta_\zeta\gamma_\zeta\frac{\rho}{\gamma-\theta\gamma-1}],B(\zeta)=\left(\varepsilon_\zeta\varepsilon^{\frac{\theta_\zeta\gamma_\zeta}{\gamma-\theta\gamma-1}}\right )^{\frac{1}{\gamma_\zeta-1}},
		\end{equation*}
		here $\hat{\rho}(\zeta)$ is specified in Lemma \ref{lmhat} and defined by
		\begin{equation*}
			\begin{aligned}
				&\hat{\rho}(\zeta)=-(\gamma_\zeta-1)(b_\zeta+\lambda_\zeta)\Phi(\zeta)+\theta_\zeta\gamma_\zeta\eta(\pi^*)+\theta_\zeta\gamma_\zeta(\gamma_\zeta-1)\sigma_\zeta^0\sigma^0 \Phi(\zeta)\pi^*\\
				&-\frac12\theta_\zeta\gamma_\zeta(\theta_\zeta\gamma_\zeta+1)(\sigma^0)^2(\pi^*)^2
				-\frac12(\gamma_\zeta-1)(\gamma_\zeta-2)\left(\sigma_\zeta^2+(\sigma_\zeta^0)^2\right)\Phi^2(\zeta)
				-\lambda_\zeta\left((1-\Phi(\zeta))^{\gamma_\zeta-1}-1\right).
			\end{aligned}
		\end{equation*}
		It follows from the uniqueness of the solution that
		\[
		c_s^{*,i}=\Psi_s(\xi^i),\qquad c_s^*=\Psi_s(\xi).
		\]
		By the boundedness and continuity of $\Phi(\zeta)$, we conclude that $\hat \rho(\zeta)$, $A(\zeta)$, and $B(\zeta)$ are bounded and continuous  on \(\mathcal O\). Moreover, the
		formula defining \(\Psi_s(\zeta)\) is well defined and strictly positive, and
		the case \(A(\zeta)=0\) is the continuous extension of the case
		\(A(\zeta)\ne0\). Therefore, \(\Psi_s(\zeta)\) is bounded and continuous on \(\mathcal O\). Define
		\[
		f_s(\zeta):=\Phi(\zeta)(b_\zeta+\lambda_\zeta)-\Psi_s(\zeta)-\frac12\Phi^2(\zeta)\left(\sigma_\zeta^2+(\sigma_\zeta^0)^2\right)+\lambda_\zeta\ln\left(1-\Phi(\zeta)\right).
		\]
		Then $f_s$ is bounded and continuous. Hence, by \eqref{ftof},
		\begin{equation*}
			\begin{aligned}
				\frac1N\sum_{i=1}^Nf_s(\xi^i)\to f_s(\xi).
			\end{aligned}
		\end{equation*}
		Therefore,
		\[
		B_s^N\to0,\qquad \text{as }N\to\infty.
		\]
		Using the elementary inequality, the Cauchy--Schwarz inequality, and It\^o's isometry, we obtain
		\begin{equation*}
			\begin{aligned}
				\mathbb{E}\left[\left|\overline{\mu_t^*}-\ln m_t^*\right|^2\right]
				&\le 3|A^N|^2+3\mathbb{E}\left[\left|\int_0^t B_s^N\,ds\right|^2\right]
				+3\mathbb{E}\left[\left|\int_0^t C_s^N\,dW_s^0\right|^2\right] \\
				&\le 3|A^N|^2+3t\int_0^t |B_s^N|^2\,ds+3\int_0^t |C_s^N|^2\,ds.
			\end{aligned}
		\end{equation*}
		Therefore, by the dominated convergence theorem, for each fixed \(t\in[0,T]\),
		\begin{equation}\label{mulnm}
			\mathbb{E}\left[\left|\overline{\mu_t^*}-\ln m_t^*\right|^2\right]\to 0
			\qquad \text{as }N\to\infty.
		\end{equation}
		By \eqref{Ymu} and \eqref{mulnm}, together with the form of \eqref{Ylnm}, we arrive at
		\begin{equation*}\label{EYlnm}
			\mathbb{E}\left[\left|\overline{Y_t^*}-\ln m_t^*\right|^2\right]\leqslant2\mathbb{E}\left[\left|\overline{Y_t^*}-\overline{\mu_t^*}\right|^2\right]+2\mathbb{E}\left[\left|\overline{\mu_t^*}-\ln m_t^*\right|^2\right]\to0\qquad \text{as }N\to\infty.
		\end{equation*}
		Hence
		\begin{equation}\label{Exm0}
			\mathbb{E}\left[\left|\overline{X_t^*}-m_t^*\right|^n\right]\to0\qquad\text{as }N\to\infty.
		\end{equation}
		
		We now prove that, for each fixed $t\in[0,T]$,
		\[
		|\overline{c_t^*}-\Gamma_t^*|^n\to0
		\qquad\text{as } N\to\infty.
		\]
		Since \(c_t^{*,i}=\Psi_t(\xi^i)\), \(c_t^*=\Gamma_t^*=\Psi_t(\xi)\), and
		\(\ln\Psi_t\) is bounded and continuous on \(\mathcal O\), Assumption
		\ref{Ao} yields
		\[
		\ln\overline{c_t^*}
		=
		\frac1N\sum_{i=1}^N\ln\Psi_t(\xi^i)
		\to
		\ln\Psi_t(\xi)
		=
		\ln\Gamma_t^* .
		\]
		By the continuity of the exponential function, it follows that
		\(\overline{c_t^*}\to\Gamma_t^*\). Since
		\(h(x)=|x-\Gamma_t^*|^n\) is continuous, we further obtain
		\begin{equation}\label{ctoGamma}
			|\overline{c_t^*}-\Gamma_t^*|^n
			=
			h(\overline{c_t^*})
			\to
			h(\Gamma_t^*)
			=
			0
			\qquad \text{as } N\to\infty.
		\end{equation}
		
		Finally, for any \(n>0\), we have
		\[
		\begin{aligned}
			&\mathbb{E}\left[
			\left|
			\overline{c_t^*}\overline{X_t^*}
			-\Gamma_t^*m_t^*
			\right|^n
			\right]\leqslant
			C_n(\overline{c_t^*})^n
			\mathbb{E}\left[
			\left|
			\overline{X_t^*}-m_t^*
			\right|^n
			\right]
			+
			C_n
			|\overline{c_t^*}-\Gamma_t^*|^n
			\mathbb{E}\left[(m_t^*)^n\right],
		\end{aligned}
		\]
		where \(C_n\) is a constant depending only on \(n\). By Lemmas \ref{mGamma-cX} and \ref{EcX}, together with \eqref{Exm0} and \eqref{ctoGamma}, we conclude that
		\[
		\lim_{N\to\infty}\mathbb{E}\left[\left|m_t^*\Gamma_t^*-\overline{c_t^*}\overline{X_t^*}\right|^n\right]=0.
		\]
		Moreover, Lemmas \ref{mGamma-cX} and \ref{EcX} imply that
		\[
		\sup_{N\ge1}\sup_{t\in[0,T]}\mathbb E\left[
		\left|
		m_t^*\Gamma_t^*
		-\overline{c_t^*}\,\overline{X_t^*}
		\right|^n
		\right]\leqslant K.
		\]
		Hence, by Tonelli's theorem and the dominated convergence theorem,
		\[
		\lim_{N\to\infty}\mathbb E\left[
		\int_0^T
		\left|
		m_t^*\Gamma_t^*
		-\overline{c_t^*}\,\overline{X_t^*}
		\right|^ndt
		\right]
		=
		\int_0^T \lim_{N\to\infty}\mathbb{E}\left[\left|m_t^*\Gamma_t^*-\overline{c_t^*}\overline{X_t^*}\right|^n\right]\,dt
		=0.
		\]
		This completes the proof.
	\end{proof}
	\begin{proof}[Proof of Theorem~\ref{Nashequ}]
		For any $(\pi^i,c^i)\in\mathcal{A}_i$ and $(\pi^{*,i},c^{*,i})$ defined by \eqref{pi*c*}, we first introduce three objective functionals:
		\begin{equation*}
			\begin{cases}
				J_i((\pi^i,c^i),(\bm{\pi}^*,\bm{c}^*)^{-i})=\mathbb{E}\left[\int_0^T U(c_t^iX_t^i(\overline{c_t^{*,-i}}\overline{X_t^{*,-i}})^{-\theta_i};\gamma_i)dt+\varepsilon_iU(X_T^i(\overline{X_T^{*,-i}})^{-\theta_i};\gamma_i)\right]\\
				J_i((\pi^{*,i},c^{*,i}),(\bm{\pi}^*,\bm{c}^*)^{-i})=\mathbb{E}\left[\int_0^T U(c_t^{*,i}X_t^{*,i}(\overline{c_t^{*}}\overline{X_t^{*}})^{-\theta_i};\gamma_i)dt+\varepsilon_iU(X_T^{*,i}(\overline{X_T^{*}})^{-\theta_i};\gamma_i)\right]\\
				\bar{J}_i((\pi^i,c^i);(m^*,\Gamma^*))=\mathbb{E}\left[\int_0^T U(c_t^iX_t^i(m^*_t\Gamma^*_t)^{-\theta_i};\gamma_i)dt+\varepsilon_iU(X_T^i(m^*_T)^{-\theta_i};\gamma_i)\right],
			\end{cases}
		\end{equation*}
		Here, for $i=1,\cdots,N$ and $t\in[0,T]$,
		\begin{equation*}
			\begin{aligned}
				\overline{X_t^{*,-i}}=\left(X^i_t\prod_{\substack{1\le j\le N\\ j\ne i}} X^{*,j}_t\right)^{\frac{1}{N}},\qquad\overline{c_t^{*,-i}}=\left(c^i_t\prod_{\substack{1\le j\le N\\ j\ne i}} c^{*,j}_t\right)^{\frac{1}{N}}.
			\end{aligned}
		\end{equation*}
		Our goal is to prove \eqref{supJ} in Theorem \ref{Nashequ}. To this end, note that
		\begin{equation}\label{SUPJ}
			\begin{aligned}
				&\quad\sup\limits_{(\pi^i,c^i)\in\mathcal{A}_i} J_i\left( (\pi^i,c^i),(\bm{\pi}^{*},\bm{c}^{*})^{-i} \right)-J_i\left( (\pi^{*,i},c^{*,i}),(\bm{\pi}^{*},\bm{c}^{*})^{-i}\right)\\
				&\leqslant\sup\limits_{(\pi^i,c^i)\in\mathcal{A}_i} \left(J_i\left( (\pi^i,c^i),(\bm{\pi}^{*},\bm{c}^{*})^{-i} \right)-\bar{J}_i\left( (\pi^i,c^i);(m^*,\Gamma^*) \right)\right)\\
				&\quad+\sup\limits_{(\pi^i,c^i)\in\mathcal{A}_i} \bar{J}_i\left( (\pi^i,c^i);(m^*,\Gamma^*) \right)-J_i\left( (\pi^{*,i},c^{*,i}),(\bm{\pi}^{*},\bm{c}^{*})^{-i}\right)\\
			\end{aligned}
		\end{equation}
		For the first term of RHS of \eqref{SUPJ}, we have
		\begin{equation*}
			\begin{aligned}
				&\quad J_i\left( (\pi^i,c^i),(\bm{\pi}^{*},\bm{c}^{*})^{-i} \right)-\bar{J}_i\left( (\pi^i,c^i),(m^*,\Gamma^*) \right)\\	
				&=\mathbb{E}\left\{\int_0^T \left[U(c_t^iX_t^i(\overline{c_t^{*,-i}}\overline{X_t^{*,-i}})^{-\theta_i};\gamma_i)-U(c_t^iX_t^i(\overline{c_t^{*}}\overline{X_t^{*}})^{-\theta_i};\gamma_i)\right]dt\right\}\\
				&\quad+\mathbb{E}\left\{\int_0^T \left[U(c_t^iX_t^i(\overline{c_t^{*}}\overline{X_t^{*}})^{-\theta_i};\gamma_i)-U(c_t^iX_t^i(m^*_t\Gamma^*_t)^{-\theta_i};\gamma_i)\right]dt\right\}\\
				&\quad+\mathbb{E}\left[\varepsilon_iU(X_T^i(\overline{X_T^{*,-i}})^{-\theta_i};\gamma_i)-\varepsilon_iU(X_T^i(\overline{X_T^{*}})^{-\theta_i};\gamma_i)\right]\\
				&\quad+\mathbb{E}\left[\varepsilon_iU(X_T^i(\overline{X_T^{*}})^{-\theta_i};\gamma_i)-\varepsilon_iU(X_T^i(m^*_T)^{-\theta_i};\gamma_i)\right]\\
				&=:I_1^i+I_2^i+I_3^i+I_4^i.
			\end{aligned}
		\end{equation*}
		We now turn to the analysis of $I_1$, by the definition of the utility function,
		\begin{equation*}
			\begin{aligned}
				I_1^i=\frac{1}{\gamma_i}\,\mathbb{E}\left\{\int_0^T
				(c_t^iX_t^i)^{\gamma_i}(\overline{c_t^{*}} \overline{X_t^{*}} )^{-\theta_i \gamma_i}\left[\left(
				\frac{c_t^iX_t^i}{c_t^{*,i}X_t^{*,i}}\right)^{-\frac{\theta_i \gamma_i}{N}}-1\right]dt\right\}.
			\end{aligned}
		\end{equation*}
		Choose \(\delta>0\) such that $0<2\delta<\min\{\underline\gamma,1-\overline\gamma\}$, and let
		\[
		N_0:=\left\lceil \frac{\overline\gamma}{\delta}\right\rceil,
		\]
		For \(x>0\), using \( |e^z-1|\le |z|e^{|z|} \), we have
		\[
		\left|x^{-\frac{\theta_i\gamma_i}{N}}-1\right|
		=
		\left|e^{-\frac{\theta_i\gamma_i}{N}\ln x}-1\right|
		\le
		\frac{\theta_i\gamma_i}{N}|\ln x|\,e^{\frac{\theta_i\gamma_i}{N}|\ln x|}.
		\]
		Moreover, for any \(N\ge N_0\),
		\[
		e^{\frac{\theta_i\gamma_i}{N}|\ln x|}
		\le e^{\delta|\ln x|}
		\le x^{\delta}+x^{-\delta}.
		\]
		Together with \( |\ln x|\le K(x^{\delta}+x^{-\delta}) \), this yields
		\begin{equation}\label{x^-1}
			\left|x^{-\frac{\theta_i\gamma_i}{N}}-1\right|
			\le \frac{K}{N}\bigl(x^{2\delta}+x^{-2\delta}\bigr),
			\qquad x>0,\; N\ge N_0.
		\end{equation}
		Substituting $x=\frac{c_t^iX_t^i}{c_t^{*,i}X_t^{*,i}}$ into \eqref{x^-1}, we obtain that, for $N\geqslant N_0$,
		\begin{align*}
			I_1^i&\leqslant\frac{K}{N}\mathbb{E}\left[\int_0^T
			(\overline{c_t^{*}} \overline{X_t^{*}} )^{-\theta_i \gamma_i}\left((c_t^iX_t^i)^{\gamma_i+2\delta}(c_t^{*,i}X_t^{*,i})^{-2\delta}\right.\right.\\
			&\left.\left.\qquad\qquad+(c_t^iX_t^i)^{\gamma_i-2\delta}(c_t^{*,i}X_t^{*,i})^{2\delta}\right)dt\right]\\
			&:=\frac{K}{N}(I_{1,1}^i+I_{1,2}^i).
		\end{align*}
		We next estimate $I_{1,1}^i$, choose $r,s,\eta>1$ such that
		\[
		\frac1{r}+\frac1{s}+\frac1{\eta}=1,\qquad \eta(\overline\gamma+2\delta)<1 .
		\]
		Then applying H\"older's inequality, we obtain
		\begin{align*}
			I_{1,1}^i&\leqslant\left\{\mathbb{E}\left[\int_0^T(\overline{c_t^{*}}\overline{X_t^{*}})^{-\theta_i\gamma_i r}dt\right]\right\}^{\frac{1}{r}}\left\{\mathbb{E}\left[\int_0^T(c_t^{*,i}X_t^{*,i})^{-2\delta s}dt\right]\right\}^{\frac{1}{s}}\\
			&\quad\times\left\{
			\mathbb{E}\left[\int_0^T
			(c_t^iX_t^i)^{(\gamma_i+2\delta)\eta}dt\right]
			\right\}^{\frac{1}{\eta}}.
		\end{align*}
		It follows from Lemma \ref{EcX} that \(I_{1,1}^i\le K\). Similarly, \(I_{1,2}^i\le K\). Hence, for \(N\ge N_0\),	we have
		$$I_1^i\le \frac{K}{N}.$$ 
		Consequently,
		\begin{equation*}
			I_1^i\to0 \qquad\text{as } N\to\infty.
		\end{equation*}  
		By the same argument, we also have
		\begin{equation*}
			I_3^i\to0 \qquad\text{as } N\to\infty.
		\end{equation*}
		
		We next estimate $I_2^i$, recall that
		\begin{equation*}
			I_2^i=\frac{1}{\gamma_i}\mathbb E\!\left[\int_0^T(c_t^iX_t^i)^{\gamma_i}\left((\overline{c_t^*}\overline{X_t^*})^{-\theta_i \gamma_i}-(m_t^*\Gamma_t^*)^{-\theta_i \gamma_i}\right)\,dt
			\right].
		\end{equation*}
		By the mean value theorem, we obtain
		\begin{equation*}
			\begin{aligned}
				I_2^i
				&\leqslant
				\theta_i
				\mathbb E\left\{
				\int_0^T\left[
				(c_t^iX_t^i)^{\gamma_i}
				|\overline{c_t^*}\overline{X_t^*}-m_t^*\Gamma_t^*|
				\left((\overline{c_t^*}\overline{X_t^*})^{-\theta_i \gamma_i-1}+(m_t^*\Gamma_t^*)^{-\theta_i \gamma_i-1}\right)\right]dt
				\right\}\\
				&=:\theta_i(I_{2,1}^i+I_{2,2}^i).
			\end{aligned}
		\end{equation*}
		Choose \(\hat r,\hat s,\hat\eta>1\) satisfying
		\[
		\frac1{\hat r}+\frac1{\hat s}+\frac1{\hat\eta}=1,
		\qquad
		\overline\gamma\hat r<1.
		\]
		We have, from H\"older's inequality, that
		\begin{equation*}
			\begin{aligned}
				I_{2,1}^i
				&\leqslant\left\{\prod_{j=1}^{N}\left[\int_{0}^{T}\mathbb{E}(c^{*,j}_tX^{*,j}_t)^{-(\theta_i\gamma_i +1)\hat{s}}dt\right]^{\frac{1}{N}}\right\}^{\frac{1}{\hat{s}}}\left\{\mathbb{E}\left[\int_0^T(c_t^{i}X_t^{i})^{\gamma_i\hat{r}}dt\right]\right\}^{\frac{1}{\hat{r}}}\\
				&\quad\times\left\{
				\mathbb{E}\left[\int_0^T
				\left|\overline{c_t^*}\overline{X_t^*}-m_t^*\Gamma_t^*\right|^{\hat{\eta}}dt\right]
				\right\}^{\frac{1}{\hat{\eta}}}.
			\end{aligned}
		\end{equation*}
		Therefore, by Lemmas \ref{EcX} and \ref{EmEGamma}, we have $I_{2,1}^i\to0$ as $N\to\infty$. The same argument implies $I_{2,2}^i\to0$ as $N\to\infty$. Hence,
		\[
		I_2^i\to0\qquad \text{as }N\to\infty.
		\]
		By the same argument, we further obtain
		\[
		I_4^i\to0, \qquad \text{as }N\to\infty.
		\]
		
		For the second term of RHS of \eqref{SUPJ}, we can derive that
		\begin{equation*}
			\begin{aligned}
				&\quad\sup\limits_{(\pi^i,c^i)\in\mathcal{A}_i} \bar{J}_i\left( (\pi^i,c^i);(m^*,\Gamma^*) \right)-J_i\left( (\pi^{*,i},c^{*,i}),(\bm{\pi}^{*},\bm{c}^{*})^{-i}\right)\\
				&=\sup\limits_{(\pi^i,c^i)\in\mathcal{A}_i} \bar{J}_i\left( (\pi^i,c^i);(m^*,\Gamma^*) \right)-\bar{J}_i\left( (\pi^{*,i},c^{*,i});(m^*,\Gamma^*)\right)\\
				&\quad+\bar{J}_i\left( (\pi^{*,i},c^{*,i});(m^*,\Gamma^*)\right)-J_i\left( (\pi^{*,i},c^{*,i}),(\bm{\pi}^{*},\bm{c}^{*})^{-i}\right).
			\end{aligned}
		\end{equation*}
		In view of Lemma \ref{lmhat} and the strategy of agent $i$ given in \eqref{pi*c*}, the first term above yields
		\begin{equation*}
			\begin{aligned}
				&\sup\limits_{(\pi^i,c^i)\in\mathcal{A}_i} \bar{J}_i\left( (\pi^i,c^i);(m^*,\Gamma^*) \right)-\bar{J}_i\left( (\pi^{*,i},c^{*,i});(m^*,\Gamma^*)\right)\\
				&=\bar{J}_i\left( (\hat{\pi}^i,\hat{c}^i);(m^*,\Gamma^*) \right)-\bar{J}_i\left( (\pi^{*,i},c^{*,i});(m^*,\Gamma^*)\right)=0.
			\end{aligned}
		\end{equation*}
		Following a similar argument as in the proof of convergence of $I_2^i+I_4^i$, we have
		\begin{equation*}
			\begin{aligned}
				&\bar{J}_i\left( (\pi^{*,i},c^{*,i});(m^*,\Gamma^*)\right)-J_i\left( (\pi^{*,i},c^{*,i}),(\bm{\pi}^{*},\bm{c}^{*})^{-i}\right)\to0\qquad \text{as }N\to\infty.
			\end{aligned}
		\end{equation*}
		Combining the estimates for \(I_1^i,I_2^i,I_3^i,I_4^i\) with the preceding analysis, we obtain the desired result. Thus, we complete the proof.
	\end{proof}

	
\end{document}